\newtheorem{thm}{Theorem}
\newtheorem{lem}[thm]{Lemma}
\newtheorem{cor}[thm]{Corollary}
\newtheorem{prop}[thm]{Proposition}
\theoremstyle{definition}
\theoremstyle{remark}
\newtheorem*{rmk}{Remark}
\newcommand{\eps}{\varepsilon}
\newcommand{\DEF}{{:=}}
\newcommand{\PT}[1]{\mathbf{#1}}
\newcommand{\re}{\mathop{\mathrm{Re}}}
\DeclareMathOperator{\dd}{\mathrm{d}}
\DeclareMathOperator{\gammafcn}{\Gamma}
\DeclareMathOperator{\RamanujanL}{L}
\DeclareMathOperator{\zetafcn}{\zeta}
\DeclareMathOperator{\HyperF}{F}
\newcommand{\Hypergeom}[5]{{\sideset{_#1}{_#2}\HyperF\!\left(\substack{\displaystyle#3\\\displaystyle#4};#5\right)}}
\newcommand{\Pochhsymb}[2]{{\left(#1\right)_{#2}}}
\title{On the lower bounds for the spherical cap discrepancy}
\author[D. Bilyk and J. S. Brauchart]{Dmitriy Bilyk\textdagger and Johann S. Brauchart\textasteriskcentered{}} 
\thanks{\noindent 
\textasteriskcentered Corresponding author.  \\ 
\textdagger The research of this author was supported by  Fulbright Austria and NAWI Graz. }
\date{\today}
\begin{document}

\address{J. S. Brauchart: 
Institute of Analysis and Number Theory, 
Graz University of Technology, 
Kopernikusgasse 24/II, 
8010 Graz, 
Austria}
\email{j.brauchart@tugraz.at}

\address{D. Bilyk:
 School of Mathematics,
 University of Minnesota, 
 206 Church St. SE
 Minneapolis, MN, 55408}
\email{dbilyk@umn.edu}

\begin{abstract}  We start by providing  a very simple and elementary new proof of the classical bound due to J. Beck which states that the spherical cap $\mathbb{L}_2$-discrepancy of any $N$ points on the unit sphere $\mathbb S^d$ in $\mathbb{R}^{d+1}$, $d\geq2$, is at least of the order $N^{-\frac12-\frac{1}{2d}}$. The argument used in this proof leads us to many further new results: estimates of the discrepancy in terms of various geometric quantities, an easy proof of {point-independent} upper estimates for the sum of positive powers of  Euclidean distances between points on the sphere, lower bounds for the discrepancy of rectifiable curves and sets of arbitrary Hausdorff dimension.
Moreover, refinements of the proof also allow us to obtain  explicit values of the constants in the lower discrepancy  bound on $\mathbb{S}^d$. The value of the obtained  asymptotic constant   falls within $3\%$    of the conjectured optimal constant  on $\mathbb S^2$ (and within up to $7\%$ on $\mathbb S^4$, $\mathbb S^8$, $\mathbb S^{24}$). 
\end{abstract}

\keywords{d-sphere, spherical cap discrepancy, Stolarsky's invariance principle, sum of distances} 
\subjclass[2000]{Primary 11K38; Secondary  	41A58)}

\maketitle


\section{Introduction}

The \emph{spherical cap $\mathbb{L}_2$-discrepancy}
\begin{equation} \label{eq:discdef}
D_{\mathbb{L}_2}^{\mathrm{C}}( \PT{x}_1, \dots, \PT{x}_N ) \DEF \sqrt{\int_{-1}^1 \int_{\mathbb{S}^d} \left| \frac{\big| \big\{ k : \PT{x}_{k} \in C( \PT{x}, t ) \big\} \big|}{N} - \sigma_d( C( \PT{x}, t ) ) \right|^2 \dd \sigma_d( \PT{x} ) \dd t}
\end{equation}
measures the irregularity of $N$ points $\PT{x}_1, \dots, \PT{x}_N$ on the unit sphere~$\mathbb{S}^d$ in the Euclidean space $\mathbb{R}^{d+1}$, $d \geq 2$, with respect to spherical caps $C( \PT{x}, t ) \DEF \{ \PT{z} \in \mathbb{S}^d :  \PT{z} \cdot \PT{x} \geq t \}$ in the $\mathbb{L}_2$-sense. Here, $\sigma_d$ denotes the normalized uniform surface area measure on $\mathbb{S}^d$. In other words, $D_{\mathbb{L}_2}^{\mathrm{C}}$ is the quadratic average of the deviation of the discrete empirical measure generated by the points $\PT{x}_1, \dots, \PT{x}_N$ from the uniform measure. 

The main postulate of the theory of irregularity of distributions states that discrete point configurations  cannot be `too uniform'. In the case of spherical cap $\mathbb{L}_2$-discrepancy, this is manifested by the following  lower bound  due to Beck \cite{Be1984}, which is uniform for $N\ge 2$:  there exists a constant $c_d >0$ such that for all $\PT{x}_1, \dots, \PT{x}_N  \in \mathbb S^d$, the discrepancy satisfies
\begin{equation} \label{eq:Beck}
D_{\mathbb{L}_2}^{\mathrm{C}}( \PT{x}_1, \dots, \PT{x}_N ) \ge c_d N^{-\frac12-\frac{1}{2d}}.
\end{equation}
This bound provides the optimal order of the spherical cap $\mathbb{L}_2$-discrepancy in terms of the number of points $N$ as can be seen from Stolarsky's invariance principle~ \cite{St1973} (cf. \eqref{eq:Stolarsky.s.invariance.principle} below). Beck's proof of \eqref{eq:Beck}  relied on the Fourier transform method, and an alternative proof given in \cite{BiDa2019} relied on spherical harmonics and Gegenbauer polynomial  expansions.


In Section~\ref{sec:simple.proof} of this paper we provide a new and simple proof of Beck's lower bound~\eqref{eq:Beck}. In addition to being completely elementary, this argument is powerful enough to allow for a variety of extensions, corollaries, and generalizations. 

In particular, in Section~\ref{sec:geometric} we provide new lower bounds for  the discrepancy $D_{\mathbb{L}_2}^{\mathrm{C}}$ in terms of  geometric quantities such as the norm of the centroid and the frame potential.

In Section~\ref{sec:dimension} we prove analogs of Beck's bound for the discrepancy of sets of arbitrary Hausdorff dimension $0\le s \le d$ (rather than just point sets). This question did not appear to be accessible with any of the previous methods. 

Moreover, in Section~\ref{sec:constants} we carefully analyze and refine the argument employed in Section~\ref{sec:simple.proof} to obtain explicit constants in Beck's lower bound \eqref{eq:Beck} which are universally or asymptotically valid. The asymptotic constant is surprisingly tight in the sense of the conjectured behaviour for $\mathbb{S}^2$ (cf. \cite{Br2011}) as we observe that the asymptotical constant recovers one significant digit (ca. $97\%$) of the factor of the leading term in the asymptotics of the minimal discrepancy.  Similarly, this analysis recovers $96\%$, $95\%$, and $93\%$ of the  conjectured optimal constant on $\mathbb S^4$, $\mathbb S^8$, and $\mathbb S^{24}$ respectively, see Table \ref{tbl:summary}. 
%
%

In Section~\ref{sec:powers.and.generalizations} we apply the same argument as in Section~\ref{sec:simple.proof} to obtain a much simpler proof of the result  of \cite{Wa1990:lower} on the lower bound for the difference between the sum and integral of powers of Euclidean distances for the points on the sphere. {We compare the constant in our bound with the conjectured constant of the asymptotic expansion (Figure~\ref{fig:calphad}).} In addition, we obtain a general result (Proposition~\ref{prop:wagner+}) providing lower bounds on the difference of discrete and continuous energies in terms of the decay of the Taylor coefficients of the kernel.


To finish the introduction, we would like to note that Beck's lower bound \eqref{eq:Beck} is tight. 
Indeed, while the `typical' discrepancy of $N$ independently and identically randomly chosen points $\PT{X}_1, \dots, \PT{X}_N$ on $\mathbb{S}^d$ has order $N^{-\frac{1}{2}}$ as can be seen from
\begin{equation*}
\mathbb{E}\big[(D_{\mathbb{L}_2}^{\mathrm{C}}( \PT{X}_1, \dots, \PT{X}_N ))^2\big] = \left( \frac{1}{d} \, \frac{\gammafcn( \frac{d+1}{2} )}{ \sqrt{\pi} \, \gammafcn( \frac{d}{2} )} \int_{\mathbb{S}^d} \int_{\mathbb{S}^d} \left\| \PT{x} - \PT{y} \right\| \dd \sigma_d( \PT{x} ) \dd \sigma_d( \PT{y} ) \right) \frac{1}{N}
\end{equation*}
(this easily follows from  Stolarsky's invariance principle presented in \eqref{eq:Stolarsky.s.invariance.principle} below), a  more restrictive random model known as jittered sampling (cf. \cite{BrSaSlWo2014}) recovers the optimal order $N^{-\frac{1}{2}-\frac{1}{2d}}$  as in \eqref{eq:Beck}. 
{As to other random point processes, the precise asymptotic behaviour with optimal order is known for the spherical ensembles on $\mathbb{S}^2$ (\cite{AlZa2015}), whereas for the harmonic ensemble on $\mathbb{S}^d$ the precise rate is $N^{-\frac{1}{2}-\frac{1}{2d}} (\log N)^{\frac{1}{2}}$ (\cite{BoGrMa2024}).} It should be mentioned that $N^{-\frac{1}{2}}$ is the best proven rate of an upper bound of the discrepancy for explicitly constructed (deterministic) $N$-point configurations on $\mathbb{S}^2$ (cf.~\cite{AiBrDi2012,Et2021}).

\section{A simple proof of Beck's lower bound}
\label{sec:simple.proof}

In this note we give a short proof of the lower bound \eqref{eq:Beck} which is completely elementary in nature, only uses basic calculus, some asymptotic analysis, and does not appeal to Fourier analysis. The presentation is essentially self-contained. The argument also allows for obtaining explicit values of the constant $c_d$ in \eqref{eq:Beck} and extends to more general settings including positive powers of the distances between $N$ points on the sphere.


\emph{Preliminaries:} We shall use the Pochhammer symbol to denote rising factorials
\begin{equation*}
\Pochhsymb{a}{0} := 1, \qquad \Pochhsymb{a}{n+1} \DEF \left( a + n \right) \Pochhsymb{a}{n}, \quad n\in \mathbb{N}_0,
\end{equation*}
which satisfy the following identities (\cite[Section~5.2(iii)]{NIST:DLMF})
\begin{equation} \label{eq:Pochhammer.identities}
\binom{a}{n} = (-1)^n \frac{\Pochhsymb{-a}{n}}{n!}, \qquad \Pochhsymb{a}{n} = \frac{\gammafcn( a + n )}{\gammafcn( a )}, \qquad \Pochhsymb{a}{2n} = 2^{2n} \Pochhsymb{\frac{a}{2}}{n} \Pochhsymb{\frac{a+1}{2}}{n}.
\end{equation}
We also need the asymptotic behaviour of a ratio of gamma functions (\cite[Section~5.11(iii)]{NIST:DLMF}),
\begin{equation*}
\frac{\gammafcn(n + a)}{\gammafcn( n + b )} \sim n^{a-b} \qquad \text{as $n \to \infty$ in $\mathbb{N}$.}
\end{equation*}
Furthermore, we observe that the power series expansion 
\begin{equation*}
\left( 1 - t \right)^{\frac{1}{2}} = \sum_{m=0}^\infty (-1)^m \binom{\frac{1}{2}}{m} t^m 
\end{equation*}
converges absolutely in $(-1,1)$ and at the endpoints $\pm1$ because of 
\begin{equation}\label{eq:coef}
(-1)^m \binom{\frac{1}{2}}{m} = \frac{\Pochhsymb{-\frac{1}{2}}{m}}{m!} = \frac{1}{\gammafcn( - \frac{1}{2} )} \, \frac{\gammafcn( m - \frac{1}{2} )}{\gammafcn( m + 1 )} \sim -\frac{1}{2 \sqrt{\pi }} \, \frac{1}{m^{\frac{3}{2}}} \qquad \text{as $m \to \infty$.}
\end{equation}
Thus, using the fact that 
\begin{equation*}
\left\| \PT{x} - \PT{y} \right\|^2 = \left( \PT{x} - \PT{y} \right) \cdot \left( \PT{x} - \PT{y} \right) = 2 - 2 \PT{x} \cdot \PT{y}, \qquad \PT{x}, \PT{y} \in \mathbb{S}^d,
\end{equation*}
we get an absolutely converging series expansion of the distance
\begin{equation}
\label{eq:series.expansion.distance}
\left\| \PT{x} - \PT{y} \right\| = \sqrt{2} \, \sum_{m=0}^\infty (-1)^m \binom{\frac{1}{2}}{m} \left( \PT{x} \cdot \PT{y} \right)^m, \qquad \PT{x}, \PT{y} \in \mathbb{S}^d.
\end{equation}

The connection between discrepancy and sum of distances is provided by {\emph{Stolarsky's invariance principle}} (cf. \cite{St1973, BrDi2013,BiDaMa2018}\footnote{We would like to remark that the proof of \eqref{eq:Stolarsky.s.invariance.principle}  given in \cite{BiDaMa2018} is completely elementary, thus making the entire proof of Beck's bound \eqref{eq:Beck} presented here elementary.}): for all $\PT{x}_1, \dots, \PT{x}_N  \in \mathbb S^d$
\begin{equation} \label{eq:Stolarsky.s.invariance.principle}
\frac{1}{C_d} \left( D_{\mathbb{L}_2}^{\mathrm{C}}( \PT{x}_1, \dots, \PT{x}_N ) \right)^2 
=
\int_{\mathbb{S}^d} \int_{\mathbb{S}^d} \left\| \PT{x} - \PT{y} \right\| \dd \sigma_d( \PT{x} ) \dd \sigma_d( \PT{y} ) - \frac{1}{N^2} \sum_{j=1}^N \sum_{k=1}^N \left\| \PT{x}_j - \PT{x}_k \right\|.
\end{equation}
This shows that minimizing the spherical cap discrepancy is equivalent to maximizing the sum of  Euclidean distances. 

The constant $C_d$ is expressed in terms of the surface area $\omega_d$ of $\mathbb{S}^d$ or gamma functions as
\begin{equation}\label{eq:Cd}
C_2 := \frac{1}{4}, \qquad C_d \DEF \frac{1}{d} \, \frac{\omega_{d-1}}{\omega_d} = \frac{1}{d} \, \frac{\gammafcn( \frac{d+1}{2} )}{ \sqrt{\pi} \, \gammafcn( \frac{d}{2} )}, \quad d\geq 2.
\end{equation}
{It represents the ratio of the volume of the unit ball in $\mathbb{R}^d$ and the surface area measure of the unit sphere in $\mathbb{R}^{d+1}$.}

%
%
Substituting the series expansion of the distance \eqref{eq:series.expansion.distance} into \eqref{eq:Stolarsky.s.invariance.principle} and rearranging the terms, we obtain
\begin{equation} \label{eq:discrepancy.formula.02}
\begin{split}
&\left( D_{\mathbb{L}_2}^{\mathrm{C}}( \PT{x}_1, \dots, \PT{x}_N ) \right)^2 \\
&\phantom{eq}=
 \sqrt{2} \, C_d \sum_{m=1}^\infty \frac{-\Pochhsymb{-\frac{1}{2}}{m}}{m!} \left( \frac{1}{N^2} \sum_{j=1}^N \sum_{k=1}^N \left( \PT{x}_j \cdot \PT{x}_k \right)^m - \int_{\mathbb{S}^d} \int_{\mathbb{S}^d} \left( \PT{x} \cdot \PT{y} \right)^m  \dd \sigma_d( \PT{x} ) \dd \sigma_d( \PT{y} ) \right).
\end{split}
\end{equation}
A crucial observation is that the coefficients $C_d \frac{-\Pochhsymb{-\frac{1}{2}}{m}}{m!}$, $m \geq 1$, are all positive. The rest of the proof of \eqref{eq:Beck} relies on two simple lemmas, whose proofs are postponed to  Appendix~\ref{app:lem}.

The first lemma shows that all of the parenthetical expressions in \eqref{eq:discrepancy.formula.02} are non-negative. 
\begin{lem}\label{lem:1st.non.negativity}
Let $d \geq 2$. 
For all $\PT{x}_1, \dots, \PT{x}_N  \in \mathbb S^d$ and for any $m \ge 0$, 
\begin{equation}\label{eq:pos}
 \frac{1}{N^2} \sum_{j=1}^N \sum_{k=1}^N \left( \PT{x}_j \cdot \PT{x}_k \right)^m - \int_{\mathbb{S}^d} \int_{\mathbb{S}^d} \left( \PT{x} \cdot \PT{y} \right)^m  \dd \sigma_d( \PT{x} ) \dd \sigma_d( \PT{y} ) \ge 0.
\end{equation}
\end{lem}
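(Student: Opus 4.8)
The plan is to recognize the left-hand side of \eqref{eq:pos} as a squared norm. Introduce the signed measure $\nu \DEF \frac{1}{N}\sum_{j=1}^N \delta_{\PT{x}_j} - \sigma_d$, which has total mass $0$, and observe that \eqref{eq:pos} is exactly the ``energy'' $\iint_{\mathbb{S}^d\times\mathbb{S}^d}(\PT{x}\cdot\PT{y})^m\,\dd\nu(\PT{x})\,\dd\nu(\PT{y})$ of $\nu$ with respect to the kernel $(\PT{x}\cdot\PT{y})^m$. I would then show this energy is nonnegative because the kernel is positive semidefinite on $\mathbb{S}^d$. (Equivalently, one could expand $(\PT{x}\cdot\PT{y})^m$ in Gegenbauer polynomials $\GegenbauerC_n^{(d-1)/2}$ with nonnegative coefficients and use the addition theorem, but the tensor-power route below is shorter and stays elementary.)

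\textbf{Step 1: the kernel is positive semidefinite.} I would record the identity $(\PT{x}\cdot\PT{y})^m = \langle \PT{x}^{\otimes m},\PT{y}^{\otimes m}\rangle$, the inner product taken in the $m$-fold tensor power $(\mathbb{R}^{d+1})^{\otimes m}$; concretely, $(\PT{x}\cdot\PT{y})^m$ is a sum with nonnegative (multinomial) coefficients of monomial products $x_{i_1}\cdots x_{i_m}\,y_{i_1}\cdots y_{i_m}$. Consequently, for any finite signed Borel measure $\mu$ on $\mathbb{S}^d$,
\[
\iint_{\mathbb{S}^d\times\mathbb{S}^d} (\PT{x}\cdot\PT{y})^m\,\dd\mu(\PT{x})\,\dd\mu(\PT{y}) = \Bigl\| \int_{\mathbb{S}^d} \PT{x}^{\otimes m}\,\dd\mu(\PT{x}) \Bigr\|^2 \ge 0 ,
\]
the interchange of the integrals with the finite-dimensional continuous map $\PT{x}\mapsto\PT{x}^{\otimes m}$ being harmless since $\mathbb{S}^d$ is compact. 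For the discrete part this is literally a finite sum, so no analytic subtlety arises.

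\textbf{Step 2: the single integral is constant, and bookkeeping.} Using rotation invariance of $\sigma_d$, a change of variables by any rotation sending a fixed pole to $\PT{z}$ shows that $J_m \DEF \int_{\mathbb{S}^d}(\PT{z}\cdot\PT{y})^m\,\dd\sigma_d(\PT{y})$ is independent of $\PT{z}\in\mathbb{S}^d$; integrating once more gives $\iint (\PT{x}\cdot\PT{y})^m\,\dd\sigma_d(\PT{x})\,\dd\sigma_d(\PT{y}) = J_m$ as well. Expanding the energy of $\nu=\frac1N\sum_j\delta_{\PT{x}_j}-\sigma_d$ then yields three terms: $\frac{1}{N^2}\sum_{j,k}(\PT{x}_j\cdot\PT{x}_k)^m$ from the discrete–discrete part; the two equal cross terms, each of which integrates $\frac1N\sum_j(\PT{x}_j\cdot\PT{y})^m$ against $\sigma_d$ and hence equals $J_m$, contributing $-2J_m$; and $+J_m$ from the $\sigma_d$–$\sigma_d$ part. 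Summing, the energy equals precisely the left-hand side of \eqref{eq:pos}, which by Step 1 is $\ge 0$. The case $m=0$ is trivial since both terms equal $1$.

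\textbf{Main obstacle.} There is essentially no hard step: the whole argument is a few lines once one has the right framing. The only two points deserving a word of care are the positive semidefiniteness of $(\PT{x}\cdot\PT{y})^m$ (which the tensor-power identity makes transparent) and the constancy in $\PT{z}$ of $\int_{\mathbb{S}^d}(\PT{z}\cdot\PT{y})^m\,\dd\sigma_d(\PT{y})$ (which is just rotation invariance); the remainder is routine bookkeeping.
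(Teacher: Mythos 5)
Your proof is correct and follows essentially the same route as the paper: both arguments identify the left-hand side as the energy $\iint (\PT{x}\cdot\PT{y})^m \,\dd\nu\,\dd\nu$ of the signed measure $\nu = \frac{1}{N}\sum_j \delta_{\PT{x}_j} - \sigma_d$, establish positive semidefiniteness of the kernel, and use rotation invariance to identify the cross term with $\iint (\PT{x}\cdot\PT{y})^m\,\dd\sigma_d\,\dd\sigma_d$. The only (cosmetic) difference is that you verify positive semidefiniteness via the tensor-power identity $(\PT{x}\cdot\PT{y})^m = \langle \PT{x}^{\otimes m}, \PT{y}^{\otimes m}\rangle$, which handles arbitrary signed measures directly as a squared norm, whereas the paper uses the Schur product theorem on Gram matrices and extends to general measures by weak-$*$ density of discrete measures.
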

This result  can be viewed as a real-valued version of the classical Welch bounds \cite{Welch}  and it can be  traced back to \cite{Si}. It  follows easily from a  well-known fact in classical potential theory stating that for positive definite kernels on the sphere, the associated energy is minimized by the uniform surface area measure, see e.g. \cite{La1972, BoHaSaBook2019}, but for the sake of completeness we provide a simple self-contained proof in Appendix~\ref{app:lem}. 

The left-hand side of \eqref{eq:pos} vanishes for all spherical $m$-designs (i.e., such  configurations $\PT{x}_1, \dots, \PT{x}_N  \in \mathbb S^d$  that $\frac{1}{N} \sum_{j=1}^N p (\PT{x}_j) = \int_{\mathbb S^d} p (\PT{x} ) \dd \sigma_d( \PT{x} )$ for all polynomials of degree at most~$m$\textcolor{olive}{; cf.}~\cite{BoRaVi2013}).  A full characterization of all sets for which the left-hand side is zero is slightly more complicated, and we do not discuss it in detail (when  $m=1$ and $m=2$, it is discussed in Section \ref{sec:geometric}). Observe that  
 for odd $m$, one obtains zero for any centrally symmetric (antipodal) configuration.  \\


The second lemma provides the value and the asymptotics of the integrals in \eqref{eq:discrepancy.formula.02}.
\begin{lem}\label{lem:2nd.inner.product.integrals.asymptotics}
Let $d \geq 2$. 
If $m=2r-1$ is odd, then $\displaystyle{ \int_{\mathbb{S}^d} \int_{\mathbb{S}^d} \left( \PT{x} \cdot \PT{y} \right)^m  \dd \sigma_d( \PT{x} ) \dd \sigma_d( \PT{y} )  = 0}$. 

\noindent If $m= 2r$ is even, then 
\begin{equation} \label{eq:lem.1.id.2}
\int_{\mathbb{S}^d} \int_{\mathbb{S}^d} \left( \PT{x} \cdot \PT{y} \right)^m  \dd \sigma_d( \PT{x} ) \dd \sigma_d( \PT{y} )   =  \frac{\Pochhsymb{\frac{1}{2}}{r}}{\Pochhsymb{\frac{d+1}{2}}{r}} \sim
\frac{\gammafcn( \frac{d+1}{2} )}{\gammafcn( \frac{1}{2} )} \, \frac{1}{r^{\frac{d}{2}}} \qquad \text{as $r \to \infty$}
\end{equation}
and the asymptotic term is a strict upper bound for all $r\geq 1$. 
\end{lem}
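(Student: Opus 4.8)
The plan is to reduce the double integral over $\mathbb{S}^d\times\mathbb{S}^d$ to a single one-dimensional integral by fixing one of the variables, and then to evaluate that integral as a Beta-type integral. Since $\sigma_d$ is rotation-invariant, for any fixed $\PT{y}\in\mathbb{S}^d$ the inner integral $\int_{\mathbb{S}^d}(\PT{x}\cdot\PT{y})^m\,\dd\sigma_d(\PT{x})$ is independent of $\PT{y}$; hence the double integral equals $\int_{\mathbb{S}^d}(\PT{x}\cdot\PT{e})^m\,\dd\sigma_d(\PT{x})$ for a fixed pole $\PT{e}$. The standard slicing formula for the sphere (the pushforward of $\sigma_d$ under $\PT{x}\mapsto\PT{x}\cdot\PT{e}$ has density proportional to $(1-t^2)^{d/2-1}$ on $[-1,1]$) then gives
\begin{equation*}
\int_{\mathbb{S}^d}\int_{\mathbb{S}^d}(\PT{x}\cdot\PT{y})^m\,\dd\sigma_d(\PT{x})\,\dd\sigma_d(\PT{y})
=\frac{\int_{-1}^1 t^m\,(1-t^2)^{\frac{d}{2}-1}\,\dd t}{\int_{-1}^1 (1-t^2)^{\frac{d}{2}-1}\,\dd t}.
\end{equation*}
For odd $m=2r-1$ the integrand $t^m(1-t^2)^{d/2-1}$ is odd, so the integral vanishes, giving the first claim immediately.

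For even $m=2r$, substitute $u=t^2$ to turn both integrals into Beta integrals: the numerator becomes $\betafcn(r+\tfrac12,\tfrac d2)$ and the denominator $\betafcn(\tfrac12,\tfrac d2)$ (up to the same constant). Using $\betafcn(a,b)=\gammafcn(a)\gammafcn(b)/\gammafcn(a+b)$ and then the Pochhammer identity $\Pochhsymb{a}{r}=\gammafcn(a+r)/\gammafcn(a)$ from~\eqref{eq:Pochhammer.identities}, the ratio collapses to
\begin{equation*}
\frac{\gammafcn(r+\frac12)/\gammafcn(r+\frac{d+1}{2})}{\gammafcn(\frac12)/\gammafcn(\frac{d+1}{2})}
=\frac{\Pochhsymb{\frac12}{r}}{\Pochhsymb{\frac{d+1}{2}}{r}},
\end{equation*}
which is exactly the asserted closed form. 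The asymptotic $\Pochhsymb{\frac12}{r}/\Pochhsymb{\frac{d+1}{2}}{r}=\gammafcn(\frac{d+1}{2})\gammafcn(r+\frac12)/\big(\gammafcn(\frac12)\gammafcn(r+\frac{d+1}{2})\big)\sim \gammafcn(\frac{d+1}{2})\,r^{-d/2}/\gammafcn(\frac12)$ then follows directly from the gamma-ratio asymptotic $\gammafcn(r+a)/\gammafcn(r+b)\sim r^{a-b}$ recorded in the preliminaries (here $a=\frac12$, $b=\frac{d+1}{2}$).

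The remaining point — that the asymptotic term is a \emph{strict upper bound} for all $r\ge 1$ — is the one requiring a small extra argument rather than a quotation. Equivalently one must show $\gammafcn(r+\frac12)/\gammafcn(r+\frac{d+1}{2}) < r^{-d/2}$ for every integer $r\ge 1$ and every $d\ge 2$; I would prove this by a monotonicity/induction argument in $r$. Writing $g(r):=r^{d/2}\,\gammafcn(r+\frac12)/\gammafcn(r+\frac{d+1}{2})$, one checks $g(r+1)/g(r)=\big(1+\tfrac1r\big)^{d/2}\cdot\frac{r+1/2}{r+(d+1)/2}<1$ (this is a one-variable inequality, provable e.g. by taking logarithms and using $\log(1+x)\le x$ together with the fact that $\frac{d+1}{2}-\frac12=\frac d2$ dominates the gain from $(1+1/r)^{d/2}$), so $g$ is strictly decreasing and $g(r)<\lim_{r\to\infty}g(r)=\gammafcn(\frac{d+1}{2})/\gammafcn(\frac12)$, which rearranges to the claimed strict bound. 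I expect this last monotonicity step, and the verification that it holds uniformly in $d\ge 2$ (the case of small $r$, say $r=1$, should be checked directly since it is the tightest), to be the only genuinely delicate part; everything else is a routine computation with Beta integrals and the identities already listed.
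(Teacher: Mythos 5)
The bulk of your argument --- rotational invariance, the slicing density $(1-t^2)^{d/2-1}$, the odd-symmetry cancellation, the Beta-integral evaluation, and the gamma-ratio asymptotics --- is exactly the paper's proof (the paper phrases the slicing step via the Funk--Hecke formula, but the computation is identical). The one place you deviate is the strict upper bound, and that is precisely where your argument breaks.

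The claimed inequality $g(r+1)/g(r)=\bigl(1+\tfrac1r\bigr)^{d/2}\,\tfrac{r+1/2}{r+(d+1)/2}<1$ is false: for $d=2$ one computes $g(1)=\tfrac23$, $g(2)=\tfrac45$, $g(3)=\tfrac67$, so $g$ is strictly \emph{increasing}. (Indeed, by Bernoulli's inequality $\bigl(1+\tfrac1r\bigr)^{d/2}\ge 1+\tfrac{d}{2r}>1+\tfrac{d/2}{r+1/2}=\tfrac{r+(d+1)/2}{r+1/2}$ for $d\ge2$, so the ratio exceeds $1$.) Moreover, even granting monotone decrease, the inference ``decreasing, hence $g(r)<\lim g$'' is backwards --- a decreasing sequence sits \emph{above} its limit --- and the limit itself is $1$, not $\gammafcn(\tfrac{d+1}{2})/\gammafcn(\tfrac12)$, by the very gamma-ratio asymptotic you quote (for $d=2$ your stated bound $g(r)<\gammafcn(\tfrac32)/\gammafcn(\tfrac12)=\tfrac12$ contradicts $g(1)=\tfrac23$). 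The good news is that the corrected statement, $g$ strictly increasing with $\lim_{r\to\infty}g(r)=1$, gives $g(r)<1$ for all $r\ge1$, which is exactly the claimed strict bound; so your strategy survives once the directions are fixed. For comparison, the paper avoids monotonicity altogether and obtains strictness from Gautschi's inequality, splitting on the parity of $d$: for $d$ even, $\gammafcn(r+\tfrac12)/\gammafcn(r+\tfrac{d+1}{2})=1/\Pochhsymb{r+\frac{1}{2}}{\frac{d}{2}}$ and every factor of the Pochhammer product exceeds $r$; for $d$ odd, one extra application of Gautschi's inequality $\gammafcn(r+\tfrac12)/\gammafcn(r+1)<r^{-1/2}$ handles the half-integer step.
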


Assuming these two lemmas,  we are ready to give a new simple  proof of Beck's lower bound~\eqref{eq:Beck}. 
Utilizing  non-negativity of each term (due to Lemma~\ref{lem:1st.non.negativity}) in relation~\eqref{eq:discrepancy.formula.02}, we omit all terms with odd integers $m$ and consider only even terms with $m =2r \geq 2M$ for some $M \in \mathbb N$ to be chosen later. Then
\begin{equation} \label{eq:discrepancy.formula.lower.bound.01}
\left( D_{\mathbb{L}_2}^{\mathrm{C}}( \PT{x}_1, \dots, \PT{x}_N ) \right)^2 
\geq
\sqrt{2} \, C_d \sum_{r=M}^\infty \frac{-\Pochhsymb{-\frac{1}{2}}{2r}}{(2r)!} \left( \frac{1}{N^2} \sum_{j=1}^N \sum_{k=1}^N \left( \PT{x}_j \cdot \PT{x}_k \right)^{2r} - \frac{\Pochhsymb{\frac{1}{2}}{r}}{\Pochhsymb{\frac{d+1}{2}}{r}} \right).
\end{equation}

By Lemma~\ref{lem:2nd.inner.product.integrals.asymptotics}, the coefficients $\frac{\Pochhsymb{\frac{1}{2}}{r}}{\Pochhsymb{\frac{d+1}{2}}{r}}$ tend to zero with decay rate $r^{-\frac{d}{2}}$. Thus, using only asymptotic reasoning, we may estimate for $ r \geq M$,
\begin{equation} \label{eq:2N}
\frac{1}{N^2} \sum_{j=1}^N \sum_{k=1}^N \left( \PT{x}_j \cdot \PT{x}_k \right)^{2r} - \frac{\Pochhsymb{\frac{1}{2}}{r}}{\Pochhsymb{\frac{d+1}{2}}{r}} 
\geq 
\frac{1}{N} - \frac{c}{M^{d/2}} 
\geq
\frac{1}{N} - \frac{1}{2N} 
= 
\frac{1}{2N},  
\end{equation}
where in the first step we keep only the diagonal terms (i.e. $j = k$) in the double sum for the points and in the second step we require that  
$\frac{c}{M^{d/2}}< \frac{1}{2N}$ for all $r\geq M$ which is guaranteed when $N$ and $M$ are related by means of 
\begin{equation} \label{eq:M.N.relation}
\alpha \leq \frac{M}{N^{\frac{2}{d}}} \leq \alpha + 1, \qquad N \geq N_0,
\end{equation}
for some sufficiently large $\alpha$ depending only on $d$ and $N_0$.

Finally, as shown in \eqref{eq:coef}, the coefficients $\frac{-\Pochhsymb{-\frac{1}{2}}{2r}}{(2r)!} $ are of the order $r^{-\frac{3}{2}}$, and therefore 
\begin{equation}\label{eq:sum}
\sum_{r=M}^\infty \frac{-\Pochhsymb{-\frac{1}{2}}{2r}}{(2r)!} \ge c' \sum_{r=M}^\infty  r^{-\frac32} \ge 2c' M^{-\frac12} \ge c''_d N^{-\frac{1}{d}}.
\end{equation}
The last relation follows from \eqref{eq:M.N.relation}. 

Putting together \eqref{eq:discrepancy.formula.lower.bound.01}, \eqref{eq:2N}, and \eqref{eq:sum}, we find that 
\begin{equation*}
\left( D_{\mathbb{L}_2}^{\mathrm{C}}( \PT{x}_1, \dots, \PT{x}_N ) \right)^2 
\geq
\sqrt{2} \, C_d \, \frac{1}{2N} \, \sum_{r=M}^\infty \frac{-\Pochhsymb{-\frac{1}{2}}{2r}}{(2r)!} 
\geq 
c_d^2 \,  N^{-1- \frac{1}{d}},
\end{equation*}
which finishes the proof of Beck's lower bound \eqref{eq:Beck}.

\section{Geometric corollaries}\label{sec:geometric}

The method of proof presented in Section \ref{sec:simple.proof} allows  {us} to obtain some immediate  corollaries which provide a lower bound for the discrepancy $D_{\mathbb{L}_2}^{\mathrm{C}}$ in terms of quantities which have geometric interpretations. Recall  that every term in the right-hand side of \eqref{eq:discrepancy.formula.02} is non-negative as proved in Lemma \ref{lem:1st.non.negativity}.  Therefore,  discrepancy can be bounded from below by any of these terms. In particular, taking $m=1$ and observing that 
\begin{equation*}
\int_{\mathbb{S}^d} \int_{\mathbb{S}^d} \left( \PT{x} \cdot \PT{y} \right)^m  \dd \sigma_d( \PT{x} ) \dd \sigma_d( \PT{y} )
= 0 \quad \quad \textup{ and } \quad \quad  \frac{1}{N^2} \sum_{j,k=1}^N   \PT{x}_j \cdot \PT{x}_k =  \left\| \frac{1}{N} \sum_{j=1}^N \PT{x}_j \right\|^2,
 \end{equation*}
 we obtain the following bound on the discrepancy in terms of the center of mass of the points. 

\begin{prop}\label{prop:m1} For any $N\ge 2$ and $ \PT{x}_1, \dots, \PT{x}_N \in \mathbb{S}^d$
\begin{equation}\label{eq:m1}
D_{\mathbb{L}_2}^{\mathrm{C}}( \PT{x}_1, \dots, \PT{x}_N ) \geq \sqrt{\frac{C_d}{\sqrt{2}}} \left\| \frac{1}{N} \sum_{j=1}^N \PT{x}_j \right\|. 
\end{equation}
\end{prop}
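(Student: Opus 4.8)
The plan is to extract Proposition~\ref{prop:m1} directly from the series identity \eqref{eq:discrepancy.formula.02} by keeping only the $m=1$ term and discarding the rest. First I would recall what was observed immediately after \eqref{eq:discrepancy.formula.02}: every coefficient $C_d \frac{-\Pochhsymb{-\frac{1}{2}}{m}}{m!}$ with $m\ge 1$ is strictly positive, and by Lemma~\ref{lem:1st.non.negativity} every parenthetical expression $\frac{1}{N^2}\sum_{j,k=1}^N(\PT{x}_j\cdot\PT{x}_k)^m - \int_{\mathbb{S}^d}\int_{\mathbb{S}^d}(\PT{x}\cdot\PT{y})^m\,\dd\sigma_d(\PT{x})\,\dd\sigma_d(\PT{y})$ is non-negative. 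Hence the entire sum on the right-hand side of \eqref{eq:discrepancy.formula.02} is bounded below by its $m=1$ summand alone.

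Next I would evaluate that summand explicitly. Since $\Pochhsymb{-\frac{1}{2}}{1}=-\frac{1}{2}$, the prefactor is $\sqrt{2}\,C_d\,\frac{-\Pochhsymb{-\frac{1}{2}}{1}}{1!}=\sqrt{2}\,C_d\cdot\frac{1}{2}=\frac{C_d}{\sqrt{2}}$. For the parenthetical part, the continuous integral with $m=1$ vanishes (this is the odd case of Lemma~\ref{lem:2nd.inner.product.integrals.asymptotics} with $r=1$, or simply the invariance of $\sigma_d$ under $\PT{x}\mapsto-\PT{x}$), while the discrete double sum collapses to the squared norm of the centroid, $\frac{1}{N^2}\sum_{j,k=1}^N\PT{x}_j\cdot\PT{x}_k=\left\|\frac{1}{N}\sum_{j=1}^N\PT{x}_j\right\|^2$, exactly as noted just before the statement. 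Combining these gives $\left(D_{\mathbb{L}_2}^{\mathrm{C}}(\PT{x}_1,\dots,\PT{x}_N)\right)^2 \ge \frac{C_d}{\sqrt{2}}\left\|\frac{1}{N}\sum_{j=1}^N\PT{x}_j\right\|^2$, and taking square roots yields \eqref{eq:m1}.

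There is essentially no obstacle here: the argument is a one-line consequence of the machinery of Section~\ref{sec:simple.proof}, and the only points requiring (minimal) care are the bookkeeping of the $m=1$ Pochhammer coefficient and the elementary identity expressing $\frac{1}{N^2}\sum_{j,k}\PT{x}_j\cdot\PT{x}_k$ as the squared centroid norm. As an optional remark one could add that equality in \eqref{eq:m1} would force all higher-order terms of \eqref{eq:discrepancy.formula.02} to vanish simultaneously, which — in view of the equality analysis for Lemma~\ref{lem:1st.non.negativity} — cannot happen for $N\ge 2$, so \eqref{eq:m1} is in fact strict; but this refinement is not needed for the stated proposition.
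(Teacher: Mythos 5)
Your proposal is correct and is essentially identical to the paper's own argument: the paper derives \eqref{eq:m1} by isolating the $m=1$ term of \eqref{eq:discrepancy.formula.02}, using Lemma~\ref{lem:1st.non.negativity} to discard the rest, and noting that the integral vanishes while the double sum equals the squared centroid norm, with the same coefficient computation $\sqrt{2}\,C_d\cdot\tfrac12=\tfrac{C_d}{\sqrt{2}}$. Your optional strictness remark is a harmless (and correct) addition not present in the paper.
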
 

\begin{rmk}
In plain words, the conclusion of Proposition \ref{prop:m1} states that if the points $ \PT{x}_1, \dots, \PT{x}_N$ are not well-balanced, i.e. their center of mass is far from zero, then this configuration cannot be distributed too uniformly, i.e. its discrepancy has to  be large. This statement seems to be completely clear heuristically, but we have not found any prior  quantitative  versions in the literature.  

The norm of the centroid of the points vanishes (and is thus   minimized)  for balanced point configurations, i.e.  spherical $1$-designs, which include, in particular, all symmetric (antipodal) configurations.  Of course, in this case \eqref{eq:m1} does not provide meaningful information about the discrepancy. 
\end{rmk}

Similarly, the second term  ($m=2$) of  \eqref{eq:discrepancy.formula.02} provides a  lower bound in terms of the  {\it{frame potentials}} (cf. \cite{BeFi2003}).
\begin{prop}\label{prop:m2} For any $N\ge 2$ and $ \PT{x}_1, \dots, \PT{x}_N \in \mathbb{S}^d$
\begin{equation}\label{eq:m2}
\left( D_{\mathbb{L}_2}^{\mathrm{C}}( \PT{x}_1, \dots, \PT{x}_N ) \right)^2 \geq \frac{C_d}{4\sqrt{2}} \left( \frac{1}{N^2} \sum_{j,k=1}^N ( \PT{x}_j \cdot \PT{x}_k )^2 - \frac{1}{d+1} \right), \qquad \PT{x}_1, \dots, \PT{x}_N \in \mathbb{S}^d.
\end{equation}
\end{prop}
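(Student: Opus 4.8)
The plan is to read off \eqref{eq:m2} directly from the master identity \eqref{eq:discrepancy.formula.02} by discarding everything except the single summand with $m=2$. Lemma~\ref{lem:1st.non.negativity} guarantees that each parenthetical expression in \eqref{eq:discrepancy.formula.02} is non-negative, while the coefficients $\sqrt{2}\,C_d\,\frac{-\Pochhsymb{-1/2}{m}}{m!}$ are all positive; hence deleting every term with $m\neq 2$ only decreases the right-hand side, so that
\begin{equation*}
\left( D_{\mathbb{L}_2}^{\mathrm{C}}( \PT{x}_1, \dots, \PT{x}_N ) \right)^2
\geq
\sqrt{2}\,C_d\,\frac{-\Pochhsymb{-\frac12}{2}}{2!}\left( \frac{1}{N^2}\sum_{j,k=1}^N (\PT{x}_j\cdot\PT{x}_k)^2 - \int_{\mathbb{S}^d}\int_{\mathbb{S}^d}(\PT{x}\cdot\PT{y})^2\,\dd\sigma_d(\PT{x})\,\dd\sigma_d(\PT{y}) \right).
\end{equation*}

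It then remains only to evaluate the two explicit constants. First, $\Pochhsymb{-\frac12}{2} = \left(-\tfrac12\right)\cdot\tfrac12 = -\tfrac14$, so the prefactor equals $\sqrt{2}\,C_d\cdot\frac{1/4}{2} = \frac{\sqrt{2}\,C_d}{8} = \frac{C_d}{4\sqrt{2}}$, matching the constant in \eqref{eq:m2}. Second, the double integral is the $r=1$ instance of Lemma~\ref{lem:2nd.inner.product.integrals.asymptotics}: with $m=2=2r$ it equals $\Pochhsymb{1/2}{1}/\Pochhsymb{(d+1)/2}{1} = (1/2)/((d+1)/2) = 1/(d+1)$. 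Substituting both values into the displayed inequality produces \eqref{eq:m2} verbatim, and the proof is complete.

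There is no genuine obstacle in this argument — it is a one-line specialization of the proof in Section~\ref{sec:simple.proof} — and the only point worth adding is the interpretation behind the name. The quantity $\sum_{j,k=1}^N (\PT{x}_j\cdot\PT{x}_k)^2$ is exactly the frame potential of the system $\PT{x}_1,\dots,\PT{x}_N$ in the sense of \cite{BeFi2003}, so \eqref{eq:m2} says that a large (normalized) frame potential forces a large discrepancy; the attendant lower bound $\frac{1}{N^2}\sum_{j,k}(\PT{x}_j\cdot\PT{x}_k)^2\geq\frac{1}{d+1}$ is precisely the $m=2$ case of Lemma~\ref{lem:1st.non.negativity} (the classical Welch bound), with equality exactly for tight frames, equivalently spherical $2$-designs, in which case \eqref{eq:m2} is vacuous. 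I would record this as a short remark following the proof.
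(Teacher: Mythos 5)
Your proof is correct and coincides with the paper's own argument: isolate the $m=2$ term of \eqref{eq:discrepancy.formula.02}, use Lemma~\ref{lem:1st.non.negativity} to discard the remaining (non-negative) terms, and evaluate the double integral as $\tfrac{1}{d+1}$ via the $r=1$ case of Lemma~\ref{lem:2nd.inner.product.integrals.asymptotics}; the constant $\sqrt{2}\,C_d\cdot\tfrac{1/4}{2}=\tfrac{C_d}{4\sqrt{2}}$ checks out. One small caveat for your closing remark: equality in the Welch bound characterizes unit norm tight frames, of which spherical $2$-designs are only the balanced special case, so ``equivalently spherical $2$-designs'' should be weakened.
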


The proof uses the fact that  $$ \int_{\mathbb{S}^d} \int_{\mathbb{S}^d} \left( \PT{x} \cdot \PT{y} \right)^2  \dd \sigma_d( \PT{x} ) \dd \sigma_d( \PT{y} ) = \frac{1}{d+1},$$ which can be seen from Lemma \ref{lem:2nd.inner.product.integrals.asymptotics}.  The quantity $\displaystyle{\sum_{j,k=1}^N ( \PT{x}_j \cdot \PT{x}_k )^2}$ is known as the 
frame potential and was introduced in  \cite{BeFi2003}), where it was proved that it is  minimized   
exactly by the so-called unit norm tight frames (UNTFs)\footnote{The term finite normalized tight frames (FNTFs) has been used in \cite{BeFi2003}, but UNTF appears to be prevalent in the more recent frame theory literature.}, which include, among others, unions of  orthonormal bases and spherical $2$-designs (in fact, $2$-designs are exactly  balanced tight frames), we refer to \cite{BeFi2003,BoRaVi2013} for relevant definitions. 

Thus \eqref{eq:m2} essentially states that if a configuration  $ \PT{x}_1, \dots, \PT{x}_N$ is far from being a tight frame, then its distribution has to be far from uniform. Heuristically, the connection between tight frames and equidsitribution was alluded to in \cite{BeFi2003}, but no quantitative statements akin to \eqref{eq:m2} have been previously known.

More generally, this approach allows one to bound the discrepancy $D_{\mathbb{L}_2}^{\mathrm{C}}$  from below by any term of \eqref{eq:discrepancy.formula.02} with $m\ge 1$. However, the discussion immediately following  Lemma \ref{lem:1st.non.negativity} suggests that for $m\ge 3$ these quantities are not as easy to interpret geometrically, therefore, we do not pursue this direction further in this paper. 

%
%

\section{Discrepancy of curves and  sets of arbitrary Hausdorff dimension.}\label{sec:dimension}

The discrepancy as defined in \eqref{eq:discdef}  measures the quality of approximating the uniform measure $\sigma_d$ on $\mathbb S^d$  by $N$-element point sets, or,  to put it in other words, by $0$-dimensional sets of $0$-dimensional Hausdorff measure $N$. This interpretation suggest a natural generalization of the problem to approximations of $\sigma_d$ by higher dimensional sets (such as curves, surfaces, or fractal sets and sets of fractional dimension). Such questions have been raised, but  to the best of our knowledge almost no results are available in this direction. A recent work \cite{EhGr} focused on one-dimensional (curve) analogs of spherical designs, but there has been no study of discrepancy or energy in this direction. 

Let $0\le s \le d$. For a set $\PT{X} \subset \mathbb S^d$ of Hausdorff dimension $s$  we can define its $s$-dimensional spherical  cap discrepancy as 
\begin{equation}\label{eq:discdef1}
D_{\mathbb{L}_2 ,s}^{\mathrm{C}}( \PT{X} ) \DEF \sqrt{\int_{-1}^1 \int_{\mathbb{S}^d} \left| \frac{\mathcal H_s \big(X \cap C( \PT{x}, t )  \big)}{\mathcal H_s (X)} - \sigma_d( C( \PT{x}, t ) ) \right|^2 \dd \sigma_d( \PT{x} ) \dd t},
\end{equation}
where $\mathcal H_s$ denotes the $s$-dimensional Hausdorff measure. This definition, in complete accordance with \eqref{eq:discdef}, quantifies how well the normalized Hausdorff measure on $X$ approximates the uniform measure $\sigma_d$. Indeed, if $s=0$ and $\PT{X} = \{ \PT{x}_1, \dots, \PT{x}_N \}$, then \eqref{eq:discdef1} coincides with~\eqref{eq:discdef}.

We shall need one more assumption about the regularity of the set $\PT{X}$. We say that an $s$-dimensional  set $\PT{X}$ is lower Alfohrs--David regular (ADR) if there exists a constant $ a(\PT{X},s)$ such that for each $\PT{x} \in \PT{X}$ and for each $\varepsilon \in (0,1)$ the $s$-dimensional Hausdorff measure of the ball $B_\varepsilon (\PT{x}) = \{ \PT{y} \in \mathbb S^d: \, \| \PT{x} - \PT{y} \| < \varepsilon\}$ intersected with $\PT{X}$ satisfies  
\begin{equation}\label{eq:ADR}
\mathcal H_s \big( B_\varepsilon (\PT{x}) \cap \PT{X}  \big) \ge a(\PT{X},s) \varepsilon^s. 
\end{equation}

\begin{thm}\label{thm:sdimensional} 
For any $s\in [0,d)$ there exists a constant $c_{d,s}>0$ such that for any  lower Alfohrs--David regular  set $\PT{X} \subset \mathbb S^d$ of  Hausdorff dimension $s$ with  $\mathcal H_s(\PT{X}) \ge 1$, its discrepancy satisfies 
\begin{equation}\label{eq:sdimensional}
D_{\mathbb{L}_2,s}^{\mathrm{C}}( \PT{X} ) \ge c_{d,s} \left( \frac{  \mathcal H_s (\PT{X}) }{a(\PT{X},s)}\right)^{- \frac{d+1}{2(d-s)}}.
\end{equation}
\end{thm}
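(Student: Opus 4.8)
The plan is to rerun the proof of Beck's bound from Section~\ref{sec:simple.proof} with the normalized Hausdorff measure $\mu \DEF \mathcal H_s|_{\PT X}/\mathcal H_s(\PT X)$ in the role of the empirical measure $\tfrac1N\sum_{j}\delta_{\PT x_j}$; the only genuinely new ingredient will be an estimate replacing the ``diagonal terms'' inequality in \eqref{eq:2N}. We may assume $\mathcal H_s(\PT X)<\infty$, since otherwise the right-hand side of \eqref{eq:sdimensional} is $0$ and there is nothing to prove, so that $\mu$ is a Borel probability measure on $\mathbb{S}^d$ with $\mu(\PT X)=1$.

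First I would record that Stolarsky's invariance principle \eqref{eq:Stolarsky.s.invariance.principle} and Lemma~\ref{lem:1st.non.negativity} both hold verbatim for an arbitrary Borel probability measure on $\mathbb{S}^d$ in place of $\tfrac1N\sum_j\delta_{\PT x_j}$: the elementary proof of \eqref{eq:Stolarsky.s.invariance.principle} from \cite{BiDaMa2018} never uses discreteness, and \eqref{eq:pos} for $\mu$ is precisely the statement that the $(\PT x\cdot\PT y)^m$-energy of a probability measure is minimized by $\sigma_d$, i.e. the classical potential-theoretic fact quoted after Lemma~\ref{lem:1st.non.negativity} (equivalently, $t\mapsto t^m$ has non-negative Gegenbauer coefficients). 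Expanding the distance in \eqref{eq:Stolarsky.s.invariance.principle} via \eqref{eq:series.expansion.distance}, exactly as in \eqref{eq:discrepancy.formula.02}, and then discarding the non-negative terms with odd $m$ and with $m=2r<2M$ as in \eqref{eq:discrepancy.formula.lower.bound.01}, we obtain for every $M\in\mathbb{N}$
\begin{equation*}
\left( D_{\mathbb{L}_2,s}^{\mathrm{C}}( \PT X ) \right)^2 \ge \sqrt2\, C_d \sum_{r=M}^\infty \frac{-\Pochhsymb{-\frac12}{2r}}{(2r)!}\left( \iint (\PT x\cdot\PT y)^{2r}\dd\mu(\PT x)\dd\mu(\PT y) - \frac{\Pochhsymb{\frac12}{r}}{\Pochhsymb{\frac{d+1}{2}}{r}} \right).
\end{equation*}

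The heart of the matter is a lower bound for the self-energy $\iint(\PT x\cdot\PT y)^{2r}\dd\mu\dd\mu$ coming from pairs of nearby points at the scale $r^{-1/2}$. Since $\PT x\cdot\PT y=1-\tfrac12\|\PT x-\PT y\|^2$, one has $(\PT x\cdot\PT y)^{2r}\ge\bigl(1-\tfrac1{2r}\bigr)^{2r}\ge\tfrac14$ whenever $\|\PT x-\PT y\|\le r^{-1/2}$ and $r\ge1$; hence, bounding $(\PT x\cdot\PT y)^{2r}$ below on the region $\|\PT x-\PT y\|\le r^{-1/2}$, then applying the lower ADR bound \eqref{eq:ADR} (valid since $r^{-1/2}<1$ for $r\ge2$) together with $\mu(\PT X)=1$,
\begin{equation*}
\iint (\PT x\cdot\PT y)^{2r}\dd\mu(\PT x)\dd\mu(\PT y)\ \ge\ \frac14\int_{\mathbb{S}^d}\mu\bigl(B_{r^{-1/2}}(\PT x)\bigr)\dd\mu(\PT x)\ \ge\ A\, r^{-s/2},\qquad A\DEF\frac{a(\PT X,s)}{4\,\mathcal H_s(\PT X)}.
\end{equation*}
On the other hand, by Lemma~\ref{lem:2nd.inner.product.integrals.asymptotics} the subtracted term $\Pochhsymb{\frac12}{r}/\Pochhsymb{\frac{d+1}{2}}{r}$ decays like $r^{-d/2}$, with the asymptotic term a strict upper bound; since $s<d$, the $r$-th parenthesis above is therefore at least $\tfrac12 A\, r^{-s/2}$ once $r^{(d-s)/2}$ exceeds a fixed multiple of $1/A$, i.e. for all $r\ge M$ with
\begin{equation*}
M\ \asymp\ \Bigl(\frac{\mathcal H_s(\PT X)}{a(\PT X,s)}\Bigr)^{\!\frac{2}{d-s}}
\end{equation*}
(implied constants depending only on $d$ and $s$; if this quantity is smaller than a suitable $d$-dependent constant, take $M$ equal to that constant instead, which is harmless since $a(\PT X,s)\le\mathcal H_s(\PT X)$ forces $\mathcal H_s(\PT X)/a(\PT X,s)\ge1$, while $r^{-1/2}<1$ and the $r^{-s/2}$ versus $r^{-d/2}$ comparison still yield a positive lower bound on the parenthesis).

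Finally, as in \eqref{eq:sum} the outer coefficients satisfy $\frac{-\Pochhsymb{-\frac12}{2r}}{(2r)!}\asymp r^{-3/2}$, so inserting the preceding estimates into the displayed inequality and summing the resulting series gives
\begin{equation*}
\left( D_{\mathbb{L}_2,s}^{\mathrm{C}}( \PT X ) \right)^2\ \gtrsim\ A\sum_{r=M}^\infty r^{-\frac{3+s}{2}}\ \asymp\ A\, M^{-\frac{1+s}{2}}\ \asymp\ \frac{a(\PT X,s)}{\mathcal H_s(\PT X)}\Bigl(\frac{\mathcal H_s(\PT X)}{a(\PT X,s)}\Bigr)^{\!-\frac{1+s}{d-s}}\ =\ \Bigl(\frac{\mathcal H_s(\PT X)}{a(\PT X,s)}\Bigr)^{\!-\frac{d+1}{d-s}},
\end{equation*}
where the last equality is $1+\frac{1+s}{d-s}=\frac{d+1}{d-s}$; taking square roots yields \eqref{eq:sdimensional}. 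I expect the self-energy estimate to be the main obstacle: one must localize precisely at the scale $r^{-1/2}$ at which $(\PT x\cdot\PT y)^{2r}$ stops being bounded below, so that the ADR exponent $s$ and the ``spectral'' exponent $d$ from Lemma~\ref{lem:2nd.inner.product.integrals.asymptotics} combine -- after weighting by the $r^{-3/2}$ coefficients and optimizing $M$ -- into exactly the exponent $\frac{d+1}{2(d-s)}$ of \eqref{eq:sdimensional}. As a sanity check, when $s=0$ one has $\mu\bigl(B_{r^{-1/2}}(\PT x)\bigr)\ge\tfrac1N$, which recovers the diagonal-terms bound of \eqref{eq:2N}, together with $a(\PT X,0)\le1$ and the exponent $\tfrac{d+1}{2d}$ of \eqref{eq:Beck}.
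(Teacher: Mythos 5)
Your proposal is correct and follows essentially the same route as the paper: the measure-valued Stolarsky principle, the series expansion with non-negative terms, a lower bound on the self-energy via localization to balls of radius $\asymp r^{-1/2}$ combined with lower Ahlfors--David regularity, and the same choice of cutoff $M \asymp (\mathcal H_s(\PT X)/a(\PT X,s))^{2/(d-s)}$. The only (immaterial) difference is that the paper localizes at radius $\varepsilon=(s/r)^{1/2}$ and bounds $(1-\tfrac{s}{2r})^{2r}s^{s/2}$ below by a constant, while you take $\varepsilon=r^{-1/2}$ and use $(1-\tfrac1{2r})^{2r}\ge\tfrac14$, which yields the same $r^{-s/2}$ estimate.
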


\noindent {\emph{Remark:}} Observe that when $\PT{X}$ is an $N$-point configuration (i.e. $s=0$ and $\mathcal H_0 (\PT{X}) = N$), one recovers Beck's lower \eqref{eq:Beck} bound of $N^{-\frac{1}{2}-\frac{1}{2d}}$, while as $s$ approaches $d$ the lower bound approaches zero.  While we don't yet have a proof of sharpness of \eqref{eq:sdimensional}, this tight endpoint behavior leads us to conjecture that \eqref{eq:sdimensional} is sharp for  $0<s <d$.

\begin{proof}
The proof is a variant of the argument in Section \ref{sec:simple.proof}. We start by applying a version of Stolarsky's invariance principle. We define a probability measure $\mu_{\PT{X}}$ supported on $\PT{X}$ by  normalizing the restriction of the  Hausdorff measure $\mathcal H_s$ to $\PT{X}$, i.e.  $\mu_{\PT{X}} = \frac{1}{\mathcal H_s (\PT{X})} \mathcal H_s\vert_{\PT{X}}$. Then we have the following version of  Stolarsky's principle:
\begin{equation} \label{eq:Stolarsky.s.invariance.principle1}
\frac{1}{C_d} \left( D_{\mathbb{L}_2,s}^{\mathrm{C}}( \PT{X} ) \right)^2 
=
\int_{\mathbb{S}^d} \int_{\mathbb{S}^d} \left\| \PT{x} - \PT{y} \right\| \dd \sigma_d( \PT{x} ) \dd \sigma_d( \PT{y} ) -   \int_{\PT{X}} \int_{\PT{X}} \left\| \PT{x} - \PT{y} \right\| \dd \mu_{\PT{X}} ( \PT{x} ) \dd \mu_{\PT{X}} ( \PT{y} ) .
\end{equation}
\noindent For a proof see \cite{BiDaMa2018}, where this identity was actually established for all Borel probability measures on $\mathbb S^d$.  Thus, arguing exactly as in Section \ref{sec:simple.proof} we can obtain the following analogue of \eqref{eq:discrepancy.formula.02}.
\begin{equation} \label{eq:discrepancy.formula.021}
\begin{split}
&\left( D_{\mathbb{L}_2,s}^{\mathrm{C}}( \PT{X} ) \right)^2 \\
&\phantom{eq}=
 \sqrt{2} \, C_d \sum_{m=1}^\infty \frac{-\Pochhsymb{-\frac{1}{2}}{m}}{m!} \left( \int_{\PT{X}} \int_{\PT{X}}  \left( \PT{x} \cdot \PT{y} \right)^m  \dd \mu_{\PT{X}} ( \PT{x} ) \dd \mu_{\PT{X}} ( \PT{y} )   - \int_{\mathbb{S}^d} \int_{\mathbb{S}^d} \left( \PT{x} \cdot \PT{y} \right)^m  \dd \sigma_d( \PT{x} ) \dd \sigma_d( \PT{y} ) \right).
\end{split}
\end{equation}
We observe that all the terms in the series above are non-negative. Indeed, the positivity of the coefficients has already been discussed, see \eqref{eq:discrepancy.formula.02}, and the non-negativity of the expressions in parentheses essentially follows from Lemma \ref{lem:1st.non.negativity} (in fact, the proof presented in Appendix \ref{app:lem} establishes it for all measures). 

The main essential difference in the rest of the proof, as compared to the discrete case,  lies in obtaining lower bounds for the expressions $\int_{\PT{X}} \int_{\PT{X}}  \left( \PT{x} \cdot \PT{y} \right)^{2r} \dd \mu_{\PT{X}} ( \PT{x} ) \dd \mu_{\PT{X}} ( \PT{y} ) $. In \eqref{eq:2N} we estimated analogous  discrete sums by $\frac{1}{N}$ by leaving only the diagonal terms, i.e. considering $x=y$. While this approach will not work for $s\neq 0 $, we emulate it by considering, for each $\PT{x} \in \PT{X}$, only those points $\PT{y} \in \PT{X}$ which lie close to $\PT{x}$, and then invoking lower Alfohrs--David regularity. Let $m=2r$ be even and  $\varepsilon >0$ be a small number. We can then estimate
\begin{align*}
\int_{\PT{X}} \int_{\PT{X}}  \left( \PT{x} \cdot \PT{y} \right)^{2r} \dd \mu_{\PT{X}} ( \PT{x} ) \dd \mu_{\PT{X}} ( \PT{y} )  & = \frac{1}{ \big( \mathcal H_s (\PT{X})\big)^2 } \int_{\PT{X}} \int_{\PT{X}}  \left( \PT{x} \cdot \PT{y} \right)^{2r} \dd \mathcal H_s  ( \PT{x} ) \dd \mathcal H_s ( \PT{y} ) \\
& \ge  \frac{1}{ \big( \mathcal H_s (\PT{X}) \big)^2}   \int_{\PT{X}} \int_{\{ \PT{y} \in \PT{X}: \| \PT{y} - \PT{x}\| < \varepsilon \}}  \left( \PT{x} \cdot \PT{y} \right)^{2r} \dd \mathcal H_s  ( \PT{y} ) \dd \mathcal H_s ( \PT{x} ) \\ 
& \ge  \frac{1}{ \big( \mathcal H_s (\PT{X})^2 \big)} \left( 1 - \frac{\varepsilon^2}{2} \right)^{2r}    \int_{\PT{X}} \mathcal H_s \big( B_\varepsilon (\PT{x}) \cap \PT{X}  \big)  \dd \mathcal H_s ( \PT{x} ) \\
& \ge \frac{a(\PT{X},s)}{  \mathcal H_s (\PT{X}) }   \left( 1 - \frac{\varepsilon^2}{2} \right)^{2r} \cdot \varepsilon^{s}.
\end{align*}
Assuming that $r > d $ and choosing $\varepsilon = \left( \frac{s}{r} \right)^{1/2}$, we obtain
\begin{equation*}
\left( 1 - \frac{\varepsilon^2}{2} \right)^{2r} \cdot \varepsilon^{s} = \left( 1 - \frac{s}{2r} \right)^{2r}  s^{s/2} r^{-s/2} \ge c_s r^{-s/2},
\end{equation*}
where we use an easy fact that $ \left( 1 - \frac{s}{2r} \right)^{2r}  s^{s/2}  $ is bounded below by a constant depending only on $s$.

Therefore, according to Lemma~\ref{lem:2nd.inner.product.integrals.asymptotics}, arguing as in \eqref{eq:2N} we obtain
\begin{align*}
\int_{\PT{X}} \int_{\PT{X}}  \left( \PT{x} \cdot \PT{y} \right)^{2r}   \dd \mu_{\PT{X}} ( \PT{x} ) \dd \mu_{\PT{X}} ( \PT{y} ) &  - \int_{\mathbb{S}^d} \int_{\mathbb{S}^d} \left( \PT{x} \cdot \PT{y} \right)^{2r}  \dd \sigma_d( \PT{x} ) \dd \sigma_d( \PT{y} )\\
&  \ge c_s  \frac{a(\PT{X},s)}{  \mathcal H_s (\PT{X}) } r^{-s/2} -  c_d r^{-d/2}  \ge c'_{d,s}  \frac{a(\PT{X},s)}{  \mathcal H_s (\PT{X}) } r^{-s/2},
\end{align*}
provided that $r \ge M = c''_{d,s} \left( \frac{  \mathcal H_s (\PT{X}) }{a(\PT{X},s)}\right)^{\frac{2}{d-s}}$. Then \eqref{eq:discrepancy.formula.021} implies the following
\begin{align*}
\left( D_{\mathbb{L}_2,s}^{\mathrm{C}}( \PT{X} ) \right)^2 & \ge C_{d,s} \sum_{r\ge M} \frac{-\Pochhsymb{-\frac{1}{2}}{2r}}{(2r)!} \cdot \frac{a(\PT{X},s)}{  \mathcal H_s (\PT{X}) } r^{-s/2} \\
& \ge C'_{d,s}  \frac{a(\PT{X},s)}{  \mathcal H_s (\PT{X}) }  \sum_{r\ge M} r^{-\frac{3+s}{2}} \ge  C''_{d,s}  \frac{a(\PT{X},s)}{  \mathcal H_s (\PT{X}) }  M^{-\frac{1+s}{2}} \\
& \ge C'''_{d,s} \left( \frac{  \mathcal H_s (\PT{X}) }{a(\PT{X},s)}\right)^{- \frac{1+s}{d-s} -1 } = C'''_{d,s} \left( \frac{  \mathcal H_s (\PT{X}) }{a(\PT{X},s)}\right)^{- \frac{d+1}{d-s}  },
\end{align*}
which finishes the proof of the theorem. 
\end{proof}

Since the case of  curves ($s=1$) presents particular interest due to \cite{EhGr}, we state it as a separate corollary. Observe that whenever $\Gamma \subset \mathbb S^d$ is a rectifiable curve with $\ell (\Gamma) = H_1 (\Gamma) \ge 1$, then automatically $a (\Gamma, 1 ) \ge \frac12$. Therefore we immediately obtain the following bound for the discrepancy of curves.

\begin{cor}
There exists a constant $c_{d,1}$, such that for any rectifiable curve $\Gamma \subset \mathbb S^d$  with  length $\ell (\Gamma) \ge 1$, its  discrepancy {satisfies}
\begin{equation}
D_{\mathbb{L}_2,1}^{\mathrm{C}}( \Gamma ) \ge c_{d,1} \left( \ell(\Gamma) \right)^{- \frac{d+1}{2(d-1)}}.
\end{equation}
\end{cor}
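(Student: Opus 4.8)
The plan is to derive the corollary directly from Theorem~\ref{thm:sdimensional} by verifying that a rectifiable curve of length at least $1$ is lower Alfohrs--David regular with a constant that can be taken to be $\tfrac12$. Once this is checked, the corollary is just the $s=1$ instance of \eqref{eq:sdimensional} with $a(\Gamma,1)\ge\tfrac12$ absorbed into the constant $c_{d,1}$.

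The one substantive point is the claim that $a(\Gamma,1)\ge\tfrac12$. First I would recall that for a rectifiable curve $\Gamma\subset\mathbb S^d$ one has $\mathcal H_1(\Gamma)=\ell(\Gamma)$, the arclength. Fix a point $\PT{x}\in\Gamma$ and $\varepsilon\in(0,1)$. Parametrize $\Gamma$ by arclength as $\gamma\colon[0,L]\to\mathbb S^d$ with $\gamma(t_0)=\PT{x}$ and $L=\ell(\Gamma)\ge1$. Since the parametrization is $1$-Lipschitz in the ambient Euclidean metric (chordal distance is at most arclength distance), the sub-arc $\{\gamma(t):|t-t_0|<\varepsilon/2,\ t\in[0,L]\}$ is entirely contained in the Euclidean ball $B_\varepsilon(\PT{x})$: indeed $\|\gamma(t)-\gamma(t_0)\|\le|t-t_0|<\varepsilon/2<\varepsilon$. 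This sub-arc has $\mathcal H_1$-measure equal to the length of the corresponding parameter interval intersected with $[0,L]$, which is at least $\varepsilon/2$ — the worst case is when $\PT{x}$ is an endpoint of $\Gamma$, in which case only a one-sided interval of length $\varepsilon/2$ is available (and note $\varepsilon/2<1/2\le L$, so this interval does not run out the far end). Hence $\mathcal H_1\big(B_\varepsilon(\PT{x})\cap\Gamma\big)\ge\tfrac12\,\varepsilon$, i.e. \eqref{eq:ADR} holds with $s=1$ and $a(\Gamma,1)=\tfrac12$.

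With this in hand, I would simply apply Theorem~\ref{thm:sdimensional} with $s=1$ (valid since $d\ge2$ so $1\in[0,d)$) and $\mathcal H_1(\Gamma)=\ell(\Gamma)\ge1$: it gives
\begin{equation*}
D_{\mathbb{L}_2,1}^{\mathrm{C}}(\Gamma)\ge c_{d,1}'\left(\frac{\ell(\Gamma)}{a(\Gamma,1)}\right)^{-\frac{d+1}{2(d-1)}}\ge c_{d,1}'\left(2\,\ell(\Gamma)\right)^{-\frac{d+1}{2(d-1)}}= c_{d,1}\,\ell(\Gamma)^{-\frac{d+1}{2(d-1)}},
\end{equation*}
where $c_{d,1}=c_{d,1}'\,2^{-\frac{d+1}{2(d-1)}}$. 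This is exactly the claimed bound.

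The main (and only) obstacle is the ADR estimate, and even that is routine once one uses the arclength parametrization and the fact that chordal distance on the sphere is dominated by arclength; the slight care needed is handling the endpoint case of the curve, which is why the constant is $\tfrac12$ rather than $1$. Everything else is a direct substitution into the already-proved theorem, so no further analysis is required.
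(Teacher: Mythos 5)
Your proposal is correct and follows exactly the same route as the paper: the paper simply observes that a rectifiable curve with $\ell(\Gamma)=\mathcal H_1(\Gamma)\ge 1$ automatically satisfies the lower Alfohrs--David regularity condition with $a(\Gamma,1)\ge\tfrac12$ and then invokes Theorem~\ref{thm:sdimensional} with $s=1$. Your arclength-parametrization argument (chordal distance dominated by arclength, with the factor $\tfrac12$ accounting for the endpoint case) supplies the justification for this ADR claim that the paper leaves as an unproved observation.
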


\section{Refinements: Analysis of the constant $c_d$ in \eqref{eq:Beck}}
\label{sec:constants}

A more careful analysis of our proof reveals information about the implicit constant~$c_d$ in inequality \eqref{eq:Beck}. Several steps in the argument can be somewhat sharpened and improved. 
We obtain uniform bounds and consider asymptotic bounds. A comparison with conjectured bounds shows that our bounds are surprisingly tight in the sense that by only using diagonal terms in the point sums, we recover one significant digit in the conjectured asymptotic constant and the relative error is small; cf. Table~\ref{tbl:summary}. 
%

%

%


\subsection{Uniform bound}
The coefficients $\frac{\Pochhsymb{\frac{1}{2}}{r}}{\Pochhsymb{\frac{d+1}{2}}{r}}$ in \eqref{eq:discrepancy.formula.lower.bound.01} are strictly monotonically decreasing as can be seen from
\begin{equation} \label{eq:monotonicity.ip.2r.int}
\frac{\Pochhsymb{\frac{1}{2}}{r+1}}{\Pochhsymb{\frac{d+1}{2}}{r+1}} 
=
\frac{\Pochhsymb{\frac{1}{2}}{r}}{\Pochhsymb{\frac{d+1}{2}}{r}} \, \frac{r + \frac{1}{2}}{r + \frac{d+1}{2}}
< \frac{\Pochhsymb{\frac{1}{2}}{r}}{\Pochhsymb{\frac{d+1}{2}}{r}}, \qquad r \in \mathbb{N}_0.
\end{equation}
Thus, we may estimate (without asymptotic reasoning but still keeping only the diagonal terms in the double sum and therefore no further assumptions on the geometric properties of the points) 
\begin{equation} \label{eq:parenthetical.non.negativity}
\frac{1}{N^2} \sum_{j=1}^N \sum_{k=1}^N \left( \PT{x}_j \cdot \PT{x}_k \right)^{2r} - \frac{\Pochhsymb{\frac{1}{2}}{r}}{\Pochhsymb{\frac{d+1}{2}}{r}} 
\geq 
\frac{1}{N} - \frac{\Pochhsymb{\frac{1}{2}}{M}}{\Pochhsymb{\frac{d+1}{2}}{M}}, \qquad r \geq M.
\end{equation}
For $M$ sufficiently large, the right-hand side of above inequality is positive and thus 
\begin{equation*}
\left( D_{\mathbb{L}_2}^{\mathrm{C}}( \PT{x}_1, \dots, \PT{x}_N ) \right)^2 
\geq
\sqrt{2} \, C_d \left( \frac{1}{N} - \frac{\Pochhsymb{\frac{1}{2}}{M}}{\Pochhsymb{\frac{d+1}{2}}{M}} \right) \sum_{r=M}^\infty \frac{-\Pochhsymb{-\frac{1}{2}}{2r}}{(2r)!}.
\end{equation*}
Using the upper bound in Gautschi’s Inequality for ratios of gamma functions \cite[Eq.~5.6.4]{NIST:DLMF}, we may estimate 
\begin{equation} \label{eq:a.m.lower.bound}
\frac{-\Pochhsymb{-\frac{1}{2}}{m}}{m!} = \frac{-1}{\gammafcn( - \frac{1}{2} )} \frac{\gammafcn( m - \frac{1}{2} )}{\gammafcn( m + 1 )} = \frac{1}{2 \sqrt{\pi}} \, \frac{1}{m} \, \frac{\gammafcn( m - 1 + \frac{1}{2} )}{\gammafcn( m - 1 + 1 )} > \frac{1}{2 \sqrt{\pi}} \, \frac{1}{m} \, \frac{1}{m^{\frac{1}{2}}} = \frac{1}{2 \sqrt{\pi}} \, \frac{1}{m^{\frac{3}{2}}}
\end{equation}
for all $m \in \mathbb{N}$ and get
\begin{equation*}
\sum_{r=M}^\infty \frac{-\Pochhsymb{-\frac{1}{2}}{2r}}{(2r)!} > \frac{1}{4 \sqrt{2\pi}} \sum_{r=M}^\infty \frac{1}{r^{\frac{3}{2}}} > \frac{1}{4 \sqrt{2\pi}} \int_M^\infty \frac{1}{x^{\frac{3}{2}}} \dd x = \frac{1}{2 \sqrt{2} \, \sqrt{\pi}} \, \frac{1}{M^{\frac{1}{2}}}, \qquad M \geq 1.
\end{equation*}
A more tedious computation shows that the lower bound is the correct leading term in the asymptotic expansion of the sum on the left-hand side; cf. Lemma~\ref{lem:sum.asymptotics.alpha} for $d = 0$ and $\alpha = 1$. 

By Lemma~\ref{lem:2nd.inner.product.integrals.asymptotics}, we have
\begin{equation*}
\left( D_{\mathbb{L}_2}^{\mathrm{C}}( \PT{x}_1, \dots, \PT{x}_N ) \right)^2 
\geq
\frac{C_d}{2 \sqrt{\pi}} \left( \frac{1}{N} - \frac{\Pochhsymb{\frac{1}{2}}{M}}{\Pochhsymb{\frac{d+1}{2}}{M}} \right) \frac{1}{M^{\frac{1}{2}}} 
> 
\frac{C_d}{2 \sqrt{\pi}} \left( \frac{1}{N} - \frac{\gammafcn( \frac{d+1}{2} )}{\gammafcn( \frac{1}{2} )} \; \frac{1}{M^{\frac{d}{2}}} \right) \frac{1}{M^{\frac{1}{2}}}
\end{equation*}
provided $M$ is sufficiently large to guarantee positivity of the lower bound. 
Substituting $M = (c N)^{\frac{2}{d}}$, we arrive at the lower bound
\begin{equation*}
\left( D_{\mathbb{L}_2}^{\mathrm{C}}( \PT{x}_1, \dots, \PT{x}_N ) \right)^2 
\geq
\frac{C_d}{2 \sqrt{\pi}}  \, \frac{F(c)}{N^{1+\frac{1}{d}}}, \qquad F(c) \DEF \left( c - \frac{\gammafcn(\frac{d+1}{2})}{\gammafcn(\frac{1}{2})} \right) \frac{1}{c^{1+\frac{1}{d}}}. 
\end{equation*}
The parameter $c$ has to be larger than $\frac{\gammafcn(\frac{d+1}{2})}{\gammafcn(\frac{1}{2})}$ for a non-trivial lower bound. 
The function $c \stackrel{F}{\mapsto} ( c - \beta ) \frac{1}{c^{1+\gamma}}$ with fixed $\beta, \gamma > 0$ has in $[\beta, \infty)$ a unique maximum at $c^* = \beta ( 1 + \frac{1}{\gamma} ) > \beta$ with value $F(c^*) = \frac{1}{( 1 + \gamma ) \big( \beta ( 1 + \frac{1}{\gamma} ) \big)^{\gamma}}$. 
%
%
Thus, for all $N \geq 2$, 
\begin{equation} \label{eq:uniform.lower.bound}
D_{\mathbb{L}_2}^{\mathrm{C}}( \PT{x}_1, \dots, \PT{x}_N ) 
\geq
c_d^* \, N^{-\frac{1}{2}-\frac{1}{2d}}, \qquad c_d^* = \sqrt{ \frac{C_d}{2 \sqrt{\pi}} \frac{1}{1+\frac{1}{d}} \left( \frac{\gammafcn( \frac{3}{2} )}{\gammafcn( \frac{d+3}{2} )} \right)^{\frac{1}{d}} }.
\end{equation}
In particular, we get 
\begin{equation*}
c_2^* = \frac{1}{6 \sqrt{6 \pi }} = 0.1959291678902056\dots.
\end{equation*}

\subsection{Asymptotic constant}
\label{subsec:asymptotic.constant}

We derive a lower bound for 
\begin{equation*}
\liminf_{\substack{N\to\infty \\ \PT{x}_1, \dots, \PT{x}_N \in \mathbb{S}^d }} N^{\frac{1}{2}+\frac{1}{2d}} D_{\mathbb{L}_2}^{\mathrm{C}}( \PT{x}_1, \dots, \PT{x}_N )
\end{equation*}
that agrees with one significant digit (and recovers more than $93\%$) of the conjectured constant of the leading term of the minimal $\mathbb{L}_2$-discrepancy asymptotics available for dimensions $2$, $4$, $8$, and $24$. 
%
Surprisingly, this result uses only the diagonal terms of the double sum for the points in \eqref{eq:discrepancy.formula.02} and discards terms with $m < 2M$ and $M$ of order $N^{\frac{2}{d}}$. 

We rewrite the odd terms in \eqref{eq:discrepancy.formula.02} as
\begin{equation*}
\frac{1}{N^2} \sum_{j,k=1}^N ( \PT{x}_j \cdot \PT{x}_k )^{2r+1} 
=
\frac{1}{N^2} \sum_{j,k=1}^N ( \PT{x}_j \cdot \PT{x}_k )^{2r} \left( 1 + \PT{x}_j \cdot \PT{x}_k \right) - \frac{1}{N^2} \sum_{j,k=1}^N ( \PT{x}_j \cdot \PT{x}_k )^{2r}.
\end{equation*}
For convenience, set $a_m \DEF \frac{-\Pochhsymb{-\frac{1}{2}}{m}}{m!}$. 
Then, considering both even and odd terms with ${m \geq 2M}$,
\begin{align*}
&\frac{\left( D_{\mathbb{L}_2}^{\mathrm{C}}( \PT{x}_1, \dots, \PT{x}_N ) \right)^2}{\sqrt{2} \, C_d} 
\geq
\sum_{r=M}^\infty a_{2r} \left( \frac{1}{N^2} \sum_{j,k=1}^N ( \PT{x}_j \cdot \PT{x}_k )^{2r} - \frac{\Pochhsymb{\frac{1}{2}}{r}}{\Pochhsymb{\frac{d+1}{2}}{r}} \right) + \sum_{r=M}^\infty a_{2r+1} \frac{1}{N^2} \sum_{j,k=1}^N ( \PT{x}_j \cdot \PT{x}_k )^{2r+1} \\
&\phantom{equ}=
\sum_{r=M}^\infty a_{2r+1} \frac{1}{N^2} \sum_{j,k=1}^N ( \PT{x}_j \cdot \PT{x}_k )^{2r} \left( 1 + \PT{x}_j \cdot \PT{x}_k \right) - \sum_{r=M}^\infty a_{2r} \, \frac{\Pochhsymb{\frac{1}{2}}{r}}{\Pochhsymb{\frac{d+1}{2}}{r}}
+ \sum_{r=M}^\infty \left( a_{2r} - a_{2r+1} \right) \frac{1}{N^2} \sum_{j,k=1}^N ( \PT{x}_j \cdot \PT{x}_k )^{2r}.
\end{align*}
By strict monotonicity (see \eqref{eq:monotonicity.ip.2r.int}), $a_{2r} - a_{2r+1} > 0$ for all $r \geq 1$. Thus,  using only the diagonal terms in the point sums, we get 
\begin{equation*}
\left( D_{\mathbb{L}_2}^{\mathrm{C}}( \PT{x}_1, \dots, \PT{x}_N ) \right)^2 
\geq 
\sqrt{2} \, C_d \left( \frac{2}{N} \sum_{r=M}^\infty a_{2r+1} - \sum_{r=M}^\infty a_{2r} \, \frac{\Pochhsymb{\frac{1}{2}}{r}}{\Pochhsymb{\frac{d+1}{2}}{r}} + \frac{1}{N} \sum_{r=M}^\infty \left( a_{2r} - a_{2r+1} \right) \right).
\end{equation*}
Reordering terms, we arrive at
\begin{equation} \label{eq:pre.bound.asymp.const}
\left( D_{\mathbb{L}_2}^{\mathrm{C}}( \PT{x}_1, \dots, \PT{x}_N ) \right)^2 
\geq 
\sqrt{2} \, C_d \left( \frac{1}{N} \sum_{m=2M}^\infty \frac{-\Pochhsymb{-\frac{1}{2}}{m}}{m!} - \sum_{r=M}^\infty \frac{-\Pochhsymb{-\frac{1}{2}}{2r}}{(2r)!} \, \frac{\Pochhsymb{\frac{1}{2}}{r}}{\Pochhsymb{\frac{d+1}{2}}{r}} \right).
\end{equation}
We proved that it suffices to consider the diagonal terms in the double sum for the points (even in the case of odd $m$) for all $m \geq 2M$ for any $M \geq 1$. 

We need the following asymptotic results proven in Appendix~\ref{app:proof.lem:sum.asymptotics.alpha}. 
\begin{lem} \label{lem:sum.asymptotics.alpha}
Let $0 < \alpha < 2$. Then
\begin{equation*}
\sum_{m=2M}^\infty \frac{-\Pochhsymb{-\frac{\alpha}{2}}{m}}{m!} 
= 
\frac{2^{-\frac{\alpha}{2}}}{\gammafcn( 1 - \frac{\alpha}{2} )} \, \frac{1}{M^{\frac{\alpha}{2}}} + \mathcal{O}\Big( \frac{1}{M^{1+\frac{\alpha}{2}}} \Big) \qquad \text{as $M \to \infty$;}
\end{equation*}
and for $d = 0$ and $d \geq 1$, 
\begin{align*}
\sum_{r=M}^\infty \frac{-\Pochhsymb{-\frac{\alpha}{2}}{2r}}{(2r)!} \, \frac{\Pochhsymb{\frac{1}{2}}{r}}{\Pochhsymb{\frac{d+1}{2}}{r}}
=
\frac{1}{M^{\frac{d}{2}}} \left( 2^{-\frac{\alpha}{2}} \frac{\alpha \gammafcn( \frac{d+1}{2} )}{2 \sqrt{\pi} \; \gammafcn( 1 - \frac{\alpha}{2} ) ( d + \alpha )} \, \frac{1}{M^{\frac{\alpha}{2}}} + \mathcal{O}\Big( \frac{1}{M^{1+\frac{\alpha}{2}}} \Big) \right) \quad \text{as $M\to \infty$.}
\end{align*}
\end{lem}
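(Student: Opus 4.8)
The plan is to treat both sums as tails of series whose general terms have a known power-law decay, and then extract the leading asymptotics by comparing the tail sum with the corresponding tail integral (Euler--Maclaurin / integral comparison), tracking the $\mathcal{O}(M^{-1-\alpha/2})$ error carefully.

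For the first sum, I would start from the exact identity $\frac{-\Pochhsymb{-\alpha/2}{m}}{m!} = \frac{-1}{\gammafcn(-\alpha/2)}\frac{\gammafcn(m-\alpha/2)}{\gammafcn(m+1)}$, valid for $0<\alpha<2$ (so that $\gammafcn(-\alpha/2)<0$ and the coefficients are positive for $m\geq 1$), exactly as in \eqref{eq:coef} and \eqref{eq:a.m.lower.bound}. The ratio $\gammafcn(m-\alpha/2)/\gammafcn(m+1)$ admits the asymptotic expansion $m^{-1-\alpha/2}\big(1 + \mathcal{O}(1/m)\big)$ from \cite[Section~5.11(iii)]{NIST:DLMF}; more precisely one has $\gammafcn(m-\alpha/2)/\gammafcn(m+1) = m^{-1-\alpha/2} + \mathcal{O}(m^{-2-\alpha/2})$. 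Summing over $m\geq 2M$ and using $\sum_{m=2M}^\infty m^{-1-\alpha/2} = \int_{2M}^\infty x^{-1-\alpha/2}\,\dd x + \mathcal{O}((2M)^{-1-\alpha/2}) = \frac{2}{\alpha}(2M)^{-\alpha/2} + \mathcal{O}(M^{-1-\alpha/2})$, the main term becomes $\frac{-1}{\gammafcn(-\alpha/2)}\cdot\frac{2}{\alpha}(2M)^{-\alpha/2}$. Using $\gammafcn(-\alpha/2) = \gammafcn(1-\alpha/2)/(-\alpha/2)$, this simplifies to $\frac{2^{-\alpha/2}}{\gammafcn(1-\alpha/2)}M^{-\alpha/2}$, which is the claimed leading term; the error terms (from the integral comparison and from the $\mathcal{O}(m^{-2-\alpha/2})$ correction in the gamma ratio) are all $\mathcal{O}(M^{-1-\alpha/2})$.

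For the second sum I would proceed analogously but now the general term is a product of two gamma-ratios. Write $\frac{-\Pochhsymb{-\alpha/2}{2r}}{(2r)!} = \frac{-1}{\gammafcn(-\alpha/2)}\frac{\gammafcn(2r-\alpha/2)}{\gammafcn(2r+1)} \sim \frac{-1}{\gammafcn(-\alpha/2)}(2r)^{-1-\alpha/2}$ and $\frac{\Pochhsymb{1/2}{r}}{\Pochhsymb{(d+1)/2}{r}} = \frac{\gammafcn((d+1)/2)}{\gammafcn(1/2)}\frac{\gammafcn(r+1/2)}{\gammafcn(r+(d+1)/2)} \sim \frac{\gammafcn((d+1)/2)}{\gammafcn(1/2)}r^{-d/2}$ by Lemma~\ref{lem:2nd.inner.product.integrals.asymptotics} and the same gamma-ratio asymptotics. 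The product therefore behaves like a constant times $r^{-1-\alpha/2}\cdot r^{-d/2} = r^{-1-(d+\alpha)/2}$; note $2^{-1-\alpha/2}$ from the $(2r)^{-1-\alpha/2}$ factor and $\frac{-1}{\gammafcn(-\alpha/2)} = \frac{\alpha/2}{\gammafcn(1-\alpha/2)}$ combine to give the prefactor $2^{-\alpha/2}\frac{\alpha\gammafcn((d+1)/2)}{2\sqrt{\pi}\,\gammafcn(1-\alpha/2)}$ after writing $\gammafcn(1/2)=\sqrt\pi$. Summing $\sum_{r=M}^\infty r^{-1-(d+\alpha)/2} = \int_M^\infty x^{-1-(d+\alpha)/2}\dd x + \mathcal{O}(M^{-1-(d+\alpha)/2}) = \frac{2}{d+\alpha}M^{-(d+\alpha)/2} + \mathcal{O}(M^{-1-(d+\alpha)/2})$ produces the factor $\frac{1}{d+\alpha}$ and the power $M^{-(d+\alpha)/2} = M^{-d/2}M^{-\alpha/2}$, matching the claim, with all corrections of the stated order. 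The case $d=0$ is handled uniformly by this computation (one just reads $d=0$ everywhere), which is exactly what is needed for the application to the uniform bound in Section~\ref{sec:constants} with $\alpha=1$.

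The main obstacle, and the only place requiring genuine care, is making the error terms rigorous and uniform: one needs a quantitative two-term expansion $\gammafcn(n+a)/\gammafcn(n+b) = n^{a-b}(1 + c_{a,b}/n + \mathcal{O}(1/n^2))$ with explicit control (available from \cite[Section~5.11]{NIST:DLMF}), applied to both gamma-ratios and then multiplied out, keeping track that the cross term $r^{-1-(d+\alpha)/2}\cdot\mathcal{O}(1/r)$ contributes only to the $\mathcal{O}(M^{-1-\alpha/2}M^{-d/2})$ remainder. Combining this with the elementary integral-comparison bound $\big|\sum_{r\geq M} f(r) - \int_M^\infty f(x)\dd x\big| \leq f(M)$ for positive decreasing $f$ (which also needs the mild observation that the terms are eventually monotone decreasing, as in \eqref{eq:monotonicity.ip.2r.int} and \eqref{eq:a.m.lower.bound}) yields the claimed expansions. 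None of this is deep; it is just bookkeeping of lower-order terms, which I would relegate to the appendix.
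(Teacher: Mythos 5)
Your proposal is correct, and the constants check out: for the first sum, $\frac{-1}{\gammafcn(-\alpha/2)}\cdot\frac{2}{\alpha}(2M)^{-\alpha/2}=\frac{2^{-\alpha/2}}{\gammafcn(1-\alpha/2)}M^{-\alpha/2}$, and for the second, the product of the two term-wise asymptotics summed against $\frac{2}{d+\alpha}M^{-(d+\alpha)/2}$ reproduces exactly the stated prefactor, with all corrections of order $M^{-d/2}\cdot M^{-1-\alpha/2}$. However, your route is genuinely different from the paper's. The paper does not use term-by-term asymptotics plus integral comparison at all: for the first sum it invokes the exact closed form $\sum_{m=2M}^\infty \frac{-\Pochhsymb{-\alpha/2}{m}}{m!}=\frac{1}{\gammafcn(1-\alpha/2)}\frac{\gammafcn(2M-\alpha/2)}{\gammafcn(2M)}$ (a telescoping identity for the tail of the binomial series) and then applies the gamma-ratio asymptotics once; for the second sum it rewrites the tail as a $_3F_2$ evaluated at $1$ with $M$-dependent parameters, applies a transformation formula from Prudnikov et al.\ to move the $M$-dependence into a single lower parameter, and reads off the asymptotics from the resulting convergent series. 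The paper's approach yields exact intermediate identities (and in principle a full asymptotic expansion) with essentially no error bookkeeping, at the price of relying on a nontrivial hypergeometric transformation; your approach is more elementary and self-contained but requires the careful uniform control of the $\mathcal{O}(1/r)$ corrections and the sum-versus-integral error that you correctly identify as the only delicate point. Both are valid proofs of the stated two-term expansions.
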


By Lemma~\ref{lem:sum.asymptotics.alpha} for $\alpha = 1$, 
\begin{equation*}
N^{1+\frac{1}{d}} \left( D_{\mathbb{L}_2}^{\mathrm{C}}( \PT{x}_1, \dots, \PT{x}_N ) \right)^2
\geq
\frac{C_d}{\sqrt{\pi}} \left( 1 -  \frac{\gammafcn( \frac{d+1}{2} )}{2 \sqrt{\pi} \, ( d + 1 )} \, \frac{N}{M^{\frac{d}{2}}} \right) \frac{N^{\frac{1}{d}}}{M^{\frac{1}{2}}} + \mathcal{O}\Big( \frac{N^{\frac{1}{d}}}{M^{\frac{3}{2}}} \Big) + \mathcal{O}\Big( \frac{N^{1+\frac{1}{d}}}{M^{\frac{d+3}{2}}} \Big).
\end{equation*}
On assuming that $M = (c N)^{\frac{2}{d}}$, $c > \frac{\gammafcn( \frac{d+1}{2} )}{2 \sqrt{\pi} \, ( d + 1 )}$, we get 
\begin{equation*}
N^{1+\frac{1}{d}} \left( D_{\mathbb{L}_2}^{\mathrm{C}}( \PT{x}_1, \dots, \PT{x}_N ) \right)^2 
\geq 
\frac{C_d}{\sqrt{\pi}} \left( c - \frac{\gammafcn( \frac{d+1}{2} )}{2 \sqrt{\pi} \, ( d + 1 )} \right) \frac{1}{c^{1+\frac{1}{d}}} + \mathcal{O}\Big( \frac{1}{N^{\frac{2}{d}}} \Big).
\end{equation*}
Thus,
\begin{equation*}
\liminf_{\substack{N\to\infty \\ \PT{x}_1, \dots, \PT{x}_N \in \mathbb{S}^d }} N^{1+\frac{1}{d}} \left( D_{\mathbb{L}_2}^{\mathrm{C}}( \PT{x}_1, \dots, \PT{x}_N ) \right)^2
\geq 
\frac{C_d}{\sqrt{\pi}} \left( c - \frac{\gammafcn( \frac{d+1}{2} )}{2 \sqrt{\pi} \, ( d + 1 )} \right) \frac{1}{c^{1+\frac{1}{d}}}.
\end{equation*}
The function $c \stackrel{F}{\mapsto} ( c - \beta ) \frac{1}{c^{1+\gamma}}$ with fixed $\beta, \gamma > 0$ has in $[\beta, \infty)$ a unique maximum at $c^* = \beta ( 1 + \frac{1}{\gamma} )$ with value $F(c^*) = \frac{1}{( 1 + \gamma ) \big( \beta ( 1 + \frac{1}{\gamma} ) \big)^{\gamma}}$. 
(We remark that the optimal $M^* = (c^*N)^{\frac{1}{d}}$ is smaller then the minimal $M$ for which $\frac{1}{N} - \frac{\Pochhsymb{\frac{1}{2}}{M}}{\Pochhsymb{\frac{d+1}{2}}{M}} \geq 0$; i.e., inclusion of the odd terms and rearrangement of terms extends the range of $m$ below the value of $M$ for which $\frac{1}{N} - \frac{\Pochhsymb{\frac{1}{2}}{M}}{\Pochhsymb{\frac{d+1}{2}}{M}}$ is a non-negative and thus useful bound in \eqref{eq:parenthetical.non.negativity}.)
Thus,
\begin{equation*}
\liminf_{\substack{N\to\infty \\ \PT{x}_1, \dots, \PT{x}_N \in \mathbb{S}^d }} N^{\frac{1}{2}+\frac{1}{2d}} D_{\mathbb{L}_2}^{\mathrm{C}}( \PT{x}_1, \dots, \PT{x}_N ) 
\geq 
c_d^{***},
\end{equation*}
where
\begin{equation} \label{eq:c.d.***}
c_d^{***} = \sqrt{\frac{C_d}{\sqrt{\pi}} F(c^*)} = \sqrt{\frac{C_d}{\sqrt{\pi}} \frac{1}{1+\frac{1}{d}} \left( \frac{2\gammafcn(\frac{1}{2})}{\gammafcn(\frac{d+1}{2})} \right)^{\frac{1}{d}}}.
\end{equation}
In particular, we get 
\begin{equation} \label{eq:c.2.***}
c_2^{***} = \frac{1}{\sqrt{3 \sqrt{\pi }}} = 0.4336625352920387\dots.
\end{equation}


\subsection{Comparison with conjectured asymptotic behaviour} 
\label{subsec:comparison}

Maximal sum of distance points $\PT{x}_1^*, \dots, \PT{x}_N^* \in \mathbb{S}^d$ have according to Stolarsky's invariance principle \eqref{eq:Stolarsky.s.invariance.principle} the smallest spherical cap $\mathbb{L}_2$-discrepancy. Utilizing a fundamental conjecture for the Riesz $s$-energy (cf. \cite{BrHaSa2012b,BoHaSaBook2019}) the second author suggested in \cite{Br2011} that the following limit exists
\begin{equation} \label{eq:minimal.L2.discrepancy.conjecture}
\lim_{N\to\infty} N^{\frac{1}{2}+\frac{1}{2d}} D_{\mathbb{L}_2}^{\mathrm{C}}( \PT{x}_1^*, \dots, \PT{x}_N^* )  
=
c_d^{\mathrm{conj}}
\DEF
\sqrt{- C_d \left[ \frac{C_{s,d}}{( \mathcal{H}_d( \mathbb{S}^d ) )^{\frac{s}{d}}} \right]_{s=-1}}.
\end{equation}
The limiting value 
depends on $C_d$ given in \eqref{eq:Cd} and the constant
\begin{equation*}
\frac{C_{s,d}}{( \mathcal{H}_d( \mathbb{S}^d ) )^{\frac{s}{d}}}
\end{equation*}
appearing in the large $N$ asymptotic expansion of the Riesz $s$-energy as the conjectured constant of the second term if $-2 < s < d + 2$ for $s \neq 0, d$ and proven to be the  constant of the leading term for $s > d$. 
It is composed of the manifold depending $d$-dimensional Hausdorff measure (normalized so that the unit cube has measure one) of $\mathbb{S}^d$, 
\begin{equation*}
\mathcal{H}_d( \mathbb{S}^d ) = \omega_d \DEF \frac{2 \; \pi^{\frac{d+1}{2}}}{\gammafcn(\frac{d+1}{2})},
\end{equation*}
and the manifold independent constant $C_{s,d}$. 
For $s > d$, the constant $C_{s,d}$ can be obtained as the limit of the properly normalized minimal Riesz $s$-energy $\mathcal{E}_s( A; N )$ for $N$ points taken, say, from a $d$-dimensional unit cube $A=[0,1]^d$:\footnote{In fact, this constant does not depend on the set $A$ for a large class of sets; cf. \emph{Poppy-seed Bagel Theorem}~\cite{BoHaSaBook2019}.} 
\begin{equation} \label{eq:def.C.s.d}
C_{s,d} \DEF \lim_{N \to \infty} \frac{\mathcal{E}_s( [0,1]^d; N )}{N^{1+\frac{s}{d}}}.
\end{equation}
Based on work on universal optimality of the integer lattice $\mathbb{Z}$, the $\mathbf{E}_8$-lattice, and the Leech lattice $\boldsymbol{\Lambda}_{24}$ in \cite{CoKu2007} and the celebrated paper \cite{CoKuMiRaVi2022}, the constant is known in these dimensions to be a multiple of the Epstein zeta function of the lattice: 
\begin{equation} \label{eq:C.s.d.for.d.EQ.1.8.24}
C_{s,d} = | \Lambda |^{\frac{s}{d}} \zetafcn_{\Lambda}\big( \frac{s}{2} \big) = | \Lambda |^{\frac{s}{d}} \sum_{\PT{0} \neq \PT{x} \in \Lambda} \frac{1}{( \PT{x} \cdot \PT{x} )^{\frac{s}{2}}}, \qquad s > d,
\end{equation}
where $\Lambda = \mathbb{Z}$ ($d=1$), $\Lambda = \mathbf{E}_8$ ($d=8$), and $\Lambda = \boldsymbol{\Lambda}_{24}$  ($d=24$). The scaling factor is a power of the co-volume $| \Lambda |$ of the lattice. 
It is conjectured that \eqref{eq:C.s.d.for.d.EQ.1.8.24} also holds for the hexagonal lattice $\boldsymbol{A}_2$ ($d=2$) and the checkerboard lattice $\boldsymbol{D}_4$ ($d = 4$). (For other values of~$d$ it is not known which form a conjecture for the value of $C_{s,d}$ should take. 
%
For example, for the purpose of $\mathbb{L}_2$-discrepancy, a universality property may only need to hold for power laws $\| \PT{x} - \PT{y} \|^\alpha$ for $0 < \alpha < 2$.
However, it is strongly believed that for every dimension $d$, the constant $C_{s,d}$ obeys a `principle of analytic continuation' in the sense that the coefficient of the second term of the Riesz energy asymptotics for $-2 < s < d + 2$ and $s \neq 0, d$ is given by the analytic continuation in the complex $s$-plane of $C_{s,d}$ defined in \eqref{eq:def.C.s.d}.) 

The Epstein zeta function $\zetafcn_{\Lambda}(s)$ of the full-rank lattice $\Lambda \subset \mathbb{R}^d$ defined for $s > d$ by 
\begin{equation*}
\zetafcn_\Lambda( s ) \DEF \sum_{\PT{0} \neq \PT{x} \in \Lambda} \frac{1}{( \PT{x} \cdot \PT{x} )^s}
\end{equation*}
has an analytic continuation to $\mathbb{C}$ except for a simple pole at $s = \frac{d}{2}$ with residue
$
\frac{d}{2} \, \frac{\mathcal{H}_d( \mathbb{B}^d )}{| \Lambda |} = \frac{\pi^{\frac{d}{2}}}{\gammafcn( \frac{d}{2} )} \, \frac{1}{| \Lambda |}
$
.
%
Thus, when \eqref{eq:C.s.d.for.d.EQ.1.8.24} holds, the constant $C_{s,d}$ has a simple pole at $s = d$ with residue 
\begin{equation*}
\lim_{s \to d} \left( s - d \right) C_{s,d} = 2 \, \frac{\pi^{\frac{d}{2}}}{\gammafcn( \frac{d}{2} )}
\end{equation*}
and is analytic elsewhere in the complex $s$-plane. 
Hence, when \eqref{eq:C.s.d.for.d.EQ.1.8.24} holds, we get 
\begin{equation} \label{eq:c.d.conj.via.lattices}
c_d^{\mathrm{conj}} 
= 
\sqrt{C_d \left( \frac{\mathcal{H}_d( \mathbb{S}^d )}{| \Lambda |} \right)^{\frac{1}{d}} \left( - \zetafcn_{\Lambda}\big( -\frac{1}{2} \big) \right)} \\
=
\sqrt{C_d \left( \omega_d \right)^{\frac{1}{d}} \left( - | \Lambda |^{-\frac{1}{d}} \zetafcn_{\Lambda}\big( -\frac{1}{2} \big) \right)}, 
\end{equation}
where $d$ is the dimension of the lattice $\lambda \in \{ \boldsymbol{A}_2, \boldsymbol{D}_4, \boldsymbol{E}_8, \boldsymbol{\Lambda}_{24} \}$.

We compare now our asympotic constant in \eqref{eq:c.d.***}, which can be written as 
\begin{equation*}
c_d^{***} = \sqrt{C_d \left( \omega_d \right)^{\frac{1}{d}} \frac{1}{\pi} \, \frac{1}{1+\frac{1}{d}}},
\end{equation*}
with the conjectured one in \eqref{eq:c.d.conj.via.lattices}.
Mathematica is used to compute the expressions and their values to $18$ digits accuracy. The numerical values are truncated (not rounded). The relative error is rounded up. 
Table~\ref{tbl:summary} provides a summary of our findings. In all cases there is agreement in one significant digit with $c_d^{\mathrm{conj}} > c_d^{***}$ and the relative error grows from about $3\%$ (if $d = 2$) to about $7\%$ (if $d = 24$). This gradual increase in the relative error for increasing dimension $d$ suggests that our lower bound does not completely capture the dependence on the dimension; cf. Figure~\ref{fig:c3star}.
%
\begin{table}[ht]
\caption{\label{tbl:summary} Comparison of conjectured $c_d^{\mathrm{conj}}$ and asymptotic $c_d^{***}$ bound for the cases when \eqref{eq:C.s.d.for.d.EQ.1.8.24} holds or is conjectured to hold.}
\begin{tabular}{r|ll|ll|l}
$d$ & $c_d^{\mathrm{conj}}$ & $c_d^{***}$ & $c_d^{\mathrm{conj}} - c_d^{***}$ & $\frac{c_d^{\mathrm{conj}} - c_d^{***}}{c_d^{\mathrm{conj}}}$ & Remarks \\
\hline
 $2$ & $0.4467972835040832\dots$ & $0.4336625352920387\dots$ & $0.013\dots$ & $3\%$ & $\boldsymbol{A}_2$; conj. univ. \\
 $4$ & $0.3426606934243682\dots$ & $0.3288548512164972\dots$ & $0.013\dots$ & $4\%$ & $\boldsymbol{D}_4$; non-univ. \\
 $8$ & $0.2558385395385698\dots$ & $0.2431072174822736\dots$ & $0.012\dots$ & $5\%$ & $\boldsymbol{E}_8$; univ. \\
$24$ & $0.1557897704986152\dots$ & $0.1451892677457039\dots$ & $0.010\dots$ & $7\%$ & $\boldsymbol{\Lambda}_{24}$; univ. \\
\end{tabular}
\end{table}

\begin{figure}[ht]
\centering
\includegraphics[scale=.2]{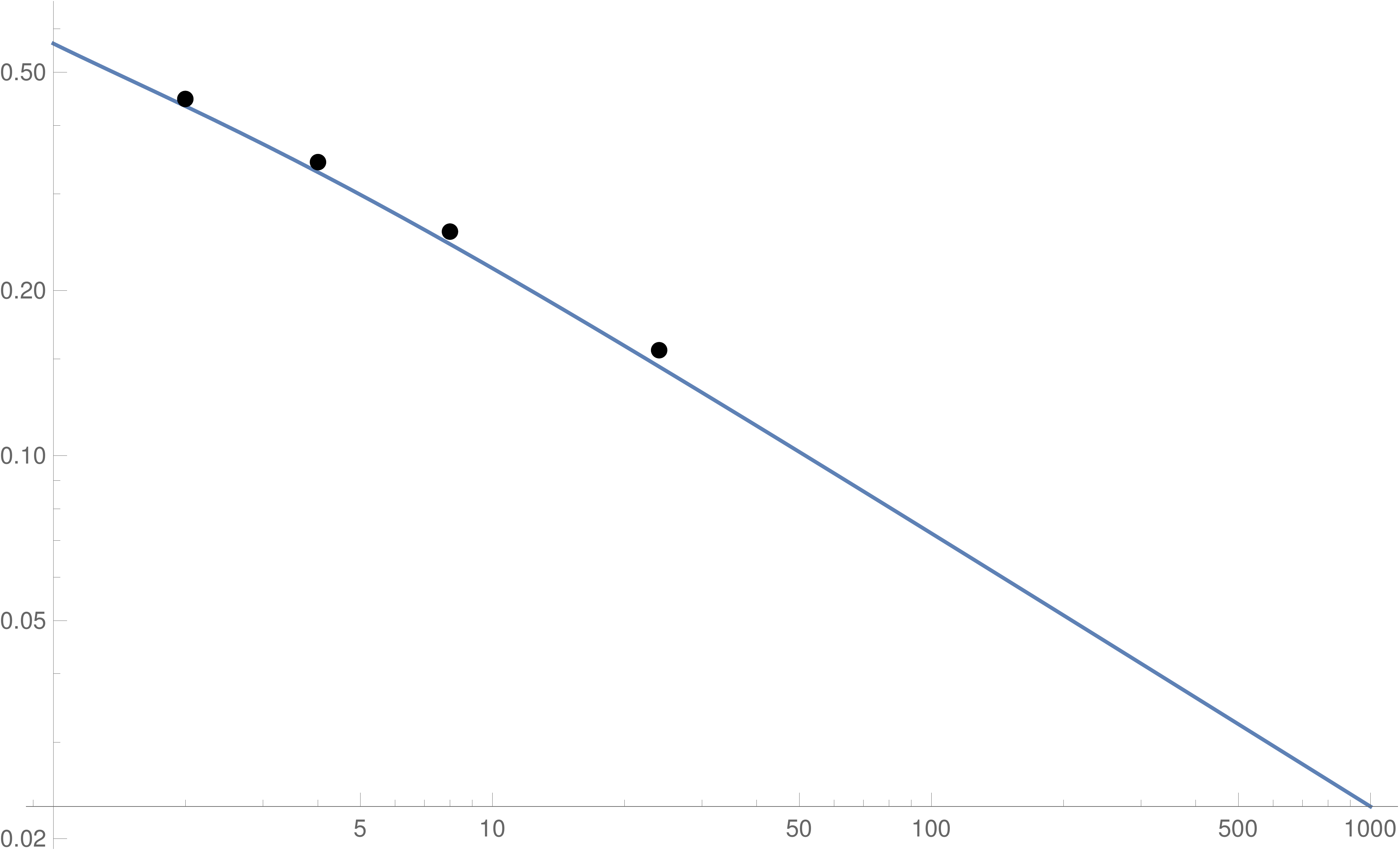} \\
\caption{\label{fig:c3star} The behaviour of $c_d^{***}$ as a function of (continuous) $d$. Points indicate the values of $c_d^{\mathrm{conj}}$ for $d = 2, 4, 8, 16$.}
\end{figure}

\subsubsection{The case $d=2$} The hexagonal or equi-triangular lattice $\boldsymbol{A}_2$  generated by the vectors $(1,0)$ and $(\frac{1}{2}, \frac{\sqrt{3}}{2})$ is conjectured to be universally optimal. It has co-volume $| \boldsymbol{A}_2 | = \frac{\sqrt{3}}{2}$ and has the Epstein zeta function 
\begin{equation*}
\zetafcn_{\boldsymbol{A}_2}( s ) = 6 \zetafcn( s ) \, 3^{-s} \left( \zetafcn\big(s,\frac{1}{3}\big) - \zetafcn\big(s,\frac{2}{3}\big) \right), \qquad s > 1,
\end{equation*}
given in terms of the Riemann zeta function and the Hurwitz zeta function 
\begin{equation*}
\zetafcn( s, a ) \DEF \sum_{k=0}^\infty \frac{1}{( k + a )^s}, \qquad \re s > 1.
\end{equation*}
We get 
\begin{equation*}
c_2^{\mathrm{conj}} = 0.4467972835040832\ldots.
\end{equation*}
Compared with \eqref{eq:c.2.***}, we observe that our bound agrees in one significant digit; more information in Table~\ref{tbl:summary}.

\subsubsection{The case $d=4$} The checkerboard lattice $\boldsymbol{D}_4$ is given by the vectors in $\mathbb{Z}^4$ with coordinates that sum to an even number. The associated $\boldsymbol{D}_4$ root system (i.e., the set of vertices of the regular 24-cell) is known to be not universally optimal; see~\cite{CoCoElKu2007}. When rescaled so that the co-volume is $1$, the Epstein zeta function is given by (cf.  \cite{SaSt2006}): 
\begin{equation*} 
\zetafcn_{\boldsymbol{D}_4}( s ) = 24 \; 2^{-\frac{s}{2}} \left( 1 - 2^{1-s} \right) \zetafcn( s ) \; \zetafcn( s - 1 ), \qquad \re s > 2.
\end{equation*}
We get
\begin{equation*}
c_4^{\mathrm{conj}} = 0.3426606934243682\dots.
\end{equation*}
Our asymptotic lower bound, given by \eqref{eq:c.d.***}, is
\begin{equation*}
c_4^{***} = \frac{3^{\frac{3}{8}}}{2^{\frac{5}{8}} \sqrt{5} \; \pi^{\frac{1}{4}}} = 0.3288548512164972\dots.
\end{equation*}
Our bound agrees in one significant digit; cf. Table~\ref{tbl:summary}.

\subsubsection{The case $d=8$ and $d = 24$} Both the $\boldsymbol{E}_8$ lattice and the Leech lattice $\boldsymbol{\Lambda}_{24}$ are universally optimal as shown in the celebrated paper \cite{CoKuMiRaVi2022}. Either lattice is unimodular (i.e. has co-volume $1$). The $\boldsymbol{E}_8$ lattice has a root system. The Leech lattice has no root system. Further properties can be found in \cite{CoSl1988}. Their Epstein zeta functions are (cf. \cite{SaSt2006}):
\begin{align*}
\zetafcn_{\boldsymbol{E}_8}( s ) &= 240 \; 2^{-s} \zetafcn( s ) \zetafcn( s - 3 )
\intertext{and}
\zetafcn_{\boldsymbol{\Lambda}_{24}}( s ) &= \frac{65520}{691} \; 2^{-s} \left( \zetafcn( s ) \zetafcn( s - 11 ) - \RamanujanL(s, \Delta) \right), 
\end{align*}
where 
\begin{equation*}
\RamanujanL(s, \Delta) \DEF \sum_{k=1}^\infty \frac{\tau( k )}{k^s}
\end{equation*}
denotes the Ramanujan tau Dirichlet L-function. 

Using Mathematica, we get 
\begin{equation*}
c_8^{\mathrm{conj}} = 0.2558385395385698\dots
\end{equation*}
which we compare with 
\begin{equation*}
c_8^{***} = \frac{35^{\frac{7}{16}}}{12\ 2^{\frac{3}{16}} 3^{\frac{1}{16}} \pi^{\frac{1}{4}}} = 0.2431072174822736\dots,
\end{equation*}
whereas
\begin{equation*}
c_{24}^{\mathrm{conj}} = 0.1557897704986152\dots
\end{equation*}
which we compare with 
\begin{equation*}
c_{24}^{***} = \frac{3^{\frac{19}{48}} 7^{\frac{11}{24}} 96577^{\frac{23}{48}}}{2560\ 2^{\frac{35}{48}} 5^{\frac{1}{24}} 11^{\frac{1}{48}}  \pi^{\frac{1}{4}}} = 0.1451892677457039\dots.
\end{equation*}
We observe agreement in one significant digit; cf. Table~\ref{tbl:summary}.

\subsection{Further remarks} 


Contrary to the usual approach in proving lower bounds, we consider the series remainder in  \eqref{eq:discrepancy.formula.02} with $m \geq 2M$ for some suitable $M$. This strategy is aided, in part, by the fact that for a spherical $t$-design all terms with $m \leq t$ vanish because of exactness of integration. 
By the celebrated result \cite{BoRaVi2013}, for every $N \geq \gamma_{d} \, t^d$, there exists a spherical $t$-design with $N$ points on $\mathbb{S}^d$ for which equality holds in \eqref{eq:pos} for all $1 \leq m \leq t$. The constant $\gamma_{d}$ depends on $d$ only. Thus, for a spherical $t$-design with $t = \frac{1}{(\gamma_d)^{\frac{1}{d}}} N^{\frac{1}{d}}$, all terms with $1 \leq m \leq \frac{1}{(\gamma_d)^{\frac{1}{d}}} N^{\frac{1}{d}}$ in \eqref{eq:discrepancy.formula.02} vanish. 
Heuristically, well-distributed points seem to approximate the behaviour of spherical designs in the sense that 
\begin{equation*}
S(m) \DEF \frac{1}{N^2} \sum_{j=1}^N \sum_{k=1}^N \left( \PT{x}_j \cdot \PT{x}_k \right)^m - \int_{\mathbb{S}^d} \int_{\mathbb{S}^d} \left( \PT{x} \cdot \PT{y} \right)^m  \dd \sigma_d( \PT{x} ) \dd \sigma_d( \PT{y} ), \qquad m \geq 1,
\end{equation*}
goes to zero qualitatively faster than $\frac{1}{N}$ if $1 \leq m \leq c_\eps N^{\frac{1}{d}-\eps}$, $c_\eps > 0$ and $\eps > 0$.
Indeed, the matching upper bound in \eqref{eq:Beck} valid for maximal sum of distance points or QMC-design sequences of sufficient strength $s \geq \frac{d+1}{2}$ (cf. \cite{BrSaSlWo2014}) implies that individual terms in the expansion \eqref{eq:discrepancy.formula.02} satisfy 
\begin{equation*}
N S(m) \leq \frac{C}{N^{\frac{1}{d}}} \, \frac{m!}{-\Pochhsymb{-\frac{1}{2}}{m}} \leq \widetilde{C} \, \frac{m^{\frac{3}{2}}}{N^{\frac{1}{d}}} \leq \widetilde{C} \, \frac{M^{\frac{3}{2}}}{N^{\frac{1}{d}}}, \qquad m = 1, 2, 3, \dots, M.
\end{equation*}
The constants $C$ and $\widetilde{C}$ depend on $d$ and the sequence of point sets and the coefficient in the inequality \eqref{eq:a.m.lower.bound}. The right-hand side goes to zero like $\frac{1}{N^{\frac{1}{d}-\frac{3}{2}\alpha}}$ if $M \sim N^\alpha$ and $0 \leq \alpha < \frac{2}{3} \frac{1}{d}$. 
A simple summation by parts argument utilizing strict monotonicity of $\frac{-\Pochhsymb{-\frac{1}{2}}{m}}{m!}$ gives 
\begin{equation*}
\frac{1}{M} \sum_{m=1}^M (N S(m)) = \mathcal{O}\Big( \frac{M^{\frac{1}{2}}}{N^{\frac{1}{d}}} \Big) = \mathcal{O}\Big( \frac{1}{N^{\frac{1}{d}-\frac{1}{2}\alpha}} \Big) \qquad \text{as $N \to \infty$,}
\end{equation*}
provided $M \sim N^\alpha$ with $0\leq \alpha < \frac{2}{d}$. 
This phenomenon is also evident in the decay of so-called structure factors for hyperuniform sequences of point sets (if $m$ stays bounded; cf.~\cite{BrGrKu2019}). 
The precise decay rate of $N S_m$ (or the structure factor of hyperuniformity) as $N \to \infty$ for $m$ stays bounded,  grows weakly to infinity, or grows like $N^{\frac{1}{d}}$ (threshold growth) is an ongoing research quest for (constructible) sequence of point sets on $\mathbb{S}^d$.

On the other hand, the series expansion \eqref{eq:discrepancy.formula.02} can be truncated after a sufficiently large~$M$. Because $S(m)$ is bounded by $1$ from above, a coarse estimate shows that 
\begin{equation*}
\sum_{r=2M}^\infty \frac{-\Pochhsymb{-\frac{1}{2}}{m}}{m!} S(m) 
=
\mathcal{O}\Big( \frac{1}{M^{\frac{1}{2}}} \Big) 
=
\mathcal{O}\Big( \frac{N^{1+\frac{1}{d}}}{M^{\frac{1}{2}}} \, \frac{1}{N^{1+\frac{1}{d}}} \Big) 
=
o\Big( \frac{1}{N^{1+\frac{1}{d}}} \Big)
\end{equation*}
as $M, N \to \infty$, provided that $\frac{N^{1+\frac{1}{d}}}{M^{\frac{1}{2}}} \to 0$ (e.g., $M \geq \kappa N^{2+\frac{2}{d}} \log N$ for some fixed $\kappa > 0$). 
%
For an $N$-point set of pairwise different points with $K$ pairs of antipodal points, we get
\begin{equation*}
S(m) = \frac{1}{N} + (-1)^m \, \frac{2K}{N^2} + \frac{1}{N^2} \mathop{\sum_{j=1}^N\sum_{k=1}^N}_{-1 < \PT{x}_j \cdot \PT{x}_k < 1} ( \PT{x}_j \cdot \PT{x}_k )^m  < \frac{1}{N} + (-1)^m \, \frac{2K}{N^2} + \left( \max_{\substack{1 \leq j,k \leq N \\ -1 < \PT{x}_j \cdot \PT{x}_k < 1}} \left| \PT{x}_j \cdot \PT{x}_k \right| \right)^m.
\end{equation*}
The double sum goes to zero as $m \to \infty$ and $N$ is fixed. We have the limit relations 
\begin{equation*}
\liminf_{m\to\infty} S(m) = \frac{1}{N} - \frac{2K}{N^2}, \qquad \limsup_{m\to\infty} S(m) = \frac{1}{N} + \frac{2K}{N^2}. 
\end{equation*}
For point sets that satisfy a well-separation property; i.e., $\| \PT{x}_j - \PT{x}_k \| \geq \frac{\beta}{N^{\frac{1}{d}}}$, $j \neq k$, for some positive $\beta$ not depending on $N$, we get 
\begin{equation*}
\sum_{m \geq \frac{2}{\beta^2} N^{\frac{2}{d}} \log N} \frac{-\Pochhsymb{-\frac{1}{2}}{m}}{m!} S(m) 
= \mathcal{O}\Big( \frac{1}{N^{1+\frac{1}{d}} \sqrt{\log N}} \Big) \qquad \text{as $N \to \infty$.}
\end{equation*}
We use that well-separation gives a bound on the number of points in a spherical cap centered at a point $\PT{x}_j$ and its antipodal position $-\PT{x}_j$. Hence, $\left| \PT{x}_j \cdot \PT{x}_k \right| \geq 1 - \frac{\beta^2}{2} \frac{1}{N^{\frac{2}{d}}}$ for at most $\beta^\prime N$ many pairs of points $( \PT{x}_j, \PT{x}_k )$ and therefore
\begin{align*}
\frac{1}{N^2} \sum_{j,k=1}^N ( \PT{x}_j \cdot \PT{x}_k )^{m} 
&\leq 
\frac{1}{N^2} \sum_{\substack{j,k=1 \\ | \PT{x}_j \cdot \PT{x}_k | \geq 1 - \frac{\beta^2}{2} \frac{1}{N^{\frac{2}{d}}}}}^N | \PT{x}_j \cdot \PT{x}_k |^{m} 
+ 
\frac{1}{N^2} \sum_{\substack{j,k=1 \\ | \PT{x}_j \cdot \PT{x}_k | < 1 - \frac{\beta^2}{2} \frac{1}{N^{\frac{2}{d}}}}}^N | \PT{x}_j \cdot \PT{x}_k |^{m} \\
&\leq 
\frac{1}{N^2} \sum_{\substack{j,k=1 \\ | \PT{x}_j \cdot \PT{x}_k | \geq 1 - \frac{\beta^2}{2} \frac{1}{N^{\frac{2}{d}}}}}^N 1  
+ 
\frac{1}{N^2} \sum_{\substack{j,k=1 \\ | \PT{x}_j \cdot \PT{x}_k | < 1 - \frac{\beta^2}{2} \frac{1}{N^{\frac{2}{d}}}}}^N \left( 1 -\frac{\beta^2}{2} \frac{1}{N^{\frac{2}{d}}} \right)^{m} \\
&\leq 
\frac{\beta^\prime}{N} 
+
e^{-\frac{\beta^2}{2} \frac{m}{N^{\frac{2}{d}}}} \\
&\leq 
\frac{\beta^\prime}{N} 
+
\frac{1}{N},
\end{align*}
provided $m \geq \frac{2}{\beta^2} N^{\frac{2}{d}} \log N$.
The subtractive part in $S(m)$ can be neglected by Lemma~\ref{lem:sum.asymptotics.alpha}. 

Numerical computations for so-called spherical Fibonacci points (images of Fibonacci lattice points in the unit square under an area-preserving Lambert transformation) conjectured to have nearly (possibly up to a logarithmic power) optimal spherical cap $\mathbb{L}_2$-discrepancy seem to exhibit all the previously discussed features; cf. Figure~\ref{fig:Fibonacci}. 

\begin{figure}[ht]
\centering
\includegraphics[scale=.2]{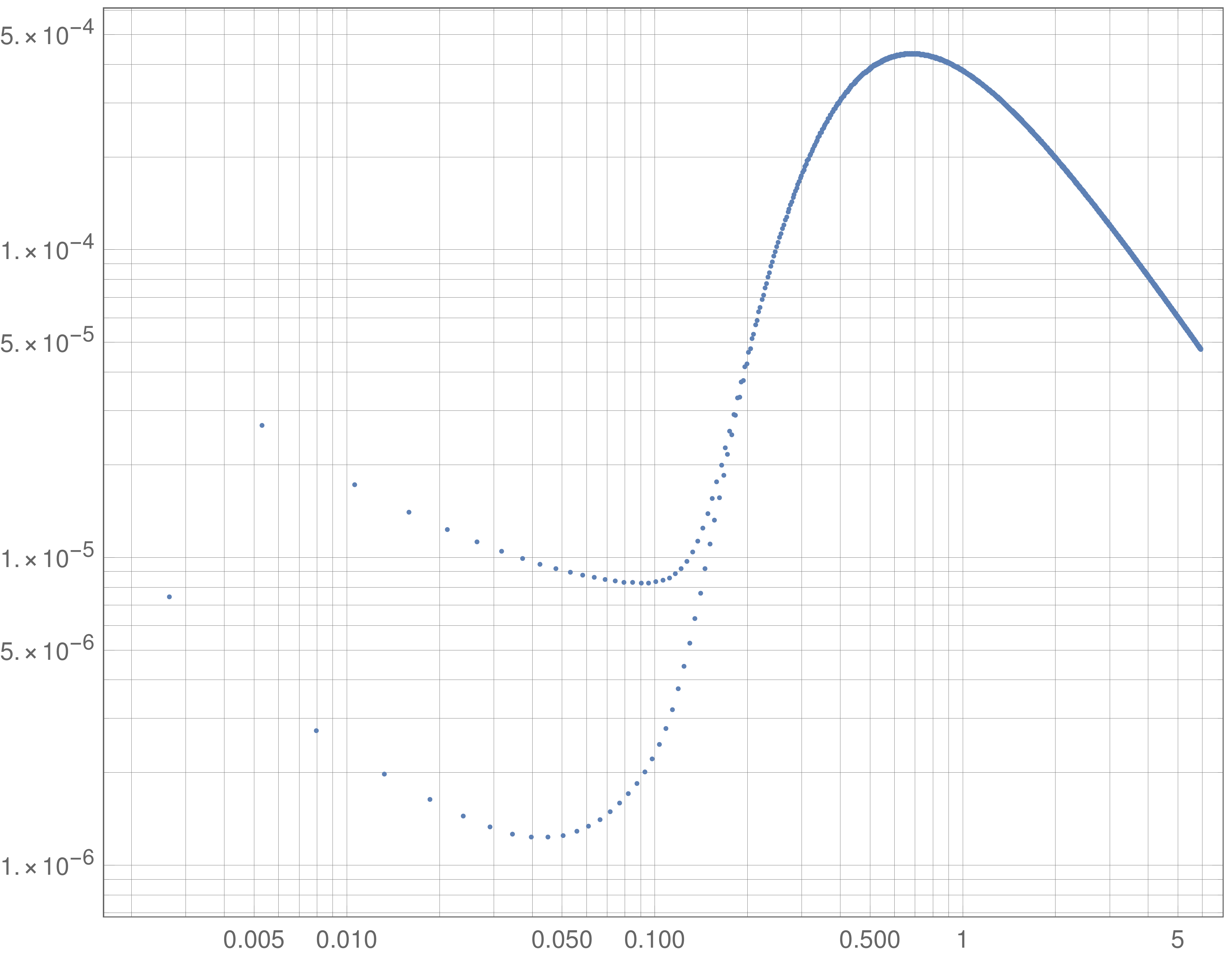} \\
\caption{\label{fig:Fibonacci} The behaviour of $N^{\frac{3}{2}}\frac{-\Pochhsymb{-\frac{1}{2}}{m}}{m!} S(m)$ versus $\frac{1}{N} \leq \frac{m}{N} \leq \log N$ for spherical Fibonacci points on $\mathbb{S}^2$ with $N = 377$ (i.e., $N = F_n$ for $n = 14$) points. There is different behaviour for even and odd $m$.}
\end{figure}

\section{Powers of the Euclidean distance and Generalizations}
\label{sec:powers.and.generalizations}

Since our argument exploits the sum of distances, rather than dealing with discrepancy directly,  it can also be utilized to analyze other positive power of the Euclidean distance. It has been proved in \cite{Wa1990:lower} that for all $0< \alpha < 2$ there exists a constant $\beta_{\alpha,d} > 0$ such that  for  any   $\PT{x}_1, \dots, \PT{x}_N  \in \mathbb S^d$ and $N$ sufficiently large 
\begin{equation}\label{eq:wagner}
\begin{split}
S_\alpha( \PT{x}_1, \dots, \PT{x}_N ) 
&\DEF \int_{\mathbb{S}^d} \int_{\mathbb{S}^d} \left\| \PT{x} - \PT{y} \right\|^\alpha \dd \sigma_d( \PT{x} ) \dd \sigma_d( \PT{y} ) - \frac{1}{N^2} \sum_{j=1}^N \sum_{k=1}^N \left\| \PT{x}_j - \PT{x}_k \right\|^\alpha \\
&\geq  \beta_{\alpha,d}  \, N^{-1-\frac{\alpha}{d}}.
\end{split}
\end{equation}
Existence of $N$-point configurations achieving an upper bound with the matching asymptotics in $N$ was proved in \cite{Wa1992:upper}. 

While the proof in \cite{Wa1990:lower} is quite technical and involved, we point out that our elementary argument presented in Section \ref{sec:simple.proof} applies to this situation almost verbatim.  The only modifications one needs to make consist of using the expansion
\begin{equation*}
\left( 1 - t \right)^{\frac{\alpha}{2}} = \sum_{m=0}^\infty (-1)^m \binom{\frac{\alpha}{2}}{m} t^m,
\end{equation*}
and observing that the coefficients $ (-1)^m \binom{\frac{\alpha}{2}}{m}$, $m \geq 1$, are positive and are of the order $m^{-1 - \frac{\alpha}{2}}$ as $m \to \infty$. Therefore the analogue of the sum  in \eqref{eq:sum} is of the order $M^{-\frac{\alpha}{2}} \asymp N^{-\frac{\alpha}{d}}$. Other than this, the proof of \eqref{eq:wagner} repeats the proof of \eqref{eq:Beck} verbatim. 

The argument for an asymptotic bound can be used in the same way. The analogue of~\eqref{eq:pre.bound.asymp.const} is
\begin{equation}
S_\alpha( \PT{x}_1, \dots, \PT{x}_N ) 
\geq 
2^{\frac{\alpha}{2}} \left( \frac{1}{N} \sum_{m=2M}^\infty \frac{-\Pochhsymb{-\frac{\alpha}{2}}{m}}{m!} - \sum_{r=M}^\infty \frac{-\Pochhsymb{-\frac{\alpha}{2}}{2r}}{(2r)!} \, \frac{\Pochhsymb{\frac{1}{2}}{r}}{\Pochhsymb{\frac{d+1}{2}}{r}} \right).
\end{equation}


By Lemma~\ref{lem:sum.asymptotics.alpha} and proceeding as in Subsection~\ref{subsec:asymptotic.constant}, we arrive at
\begin{equation*}
\liminf_{\substack{N\to\infty \\ \PT{x}_1, \dots, \PT{x}_N \in \mathbb{S}^d }} N^{1+\frac{\alpha}{d}} S_\alpha( \PT{x}_1, \dots, \PT{x}_N )
\geq 
c_{\alpha,d}^{\mathrm{asymp}},
\end{equation*}
where
\begin{equation} \label{eq:c.d.s.asymp}
c_{\alpha,d}^{\mathrm{asymp}} 
= \frac{1}{\gammafcn( 1 - \frac{\alpha}{2})} \, \frac{1}{1+\frac{\alpha}{d}} \left( \frac{2\gammafcn(\frac{1}{2})}{\gammafcn(\frac{d+1}{2})} \right)^{\frac{\alpha}{d}} = \left( \omega_d \right)^{\frac{\alpha}{d}} \frac{1}{\gammafcn( 1 - \frac{\alpha}{2})} \, \frac{1}{1+\frac{\alpha}{d}} \, \frac{1}{\pi^{\frac{\alpha}{2}}}.
\end{equation}
In particular, we get 
\begin{equation} \label{eq:c.2.s.asymp}
c_{\alpha,2}^{\mathrm{asymp}} 
= 
\frac{1}{\gammafcn( 1 - \frac{\alpha}{2})} \, \frac{2^\alpha}{1+\frac{\alpha}{2}}.
\end{equation}
We graphically compare our constant with the conjectured constant
\begin{equation} \label{eq:c.alpha.d.conj.via.lattices}
c_{\alpha,d}^{\mathrm{conj}} 
=
\left( \omega_d \right)^{\frac{\alpha}{d}} \left( - | \Lambda |^{-\frac{\alpha}{d}} \zetafcn_{\Lambda}\big( -\frac{\alpha}{2} \big) \right), 
\end{equation}
where $d$ is the dimension of the lattice $\lambda \in \{ \boldsymbol{A}_2, \boldsymbol{D}_4, \boldsymbol{E}_8, \boldsymbol{\Lambda}_{24} \}$ and $0\leq \alpha \leq 2$. 
This constant can be derived in a similar way as in Subsection~\ref{subsec:comparison} using a fundamental conjecture for the maximal sum of generalised distances given in \cite{BrHaSa2012b,BoHaSaBook2019}. 
Figure~\ref{fig:calphad} indicates that for each of the dimensions $d=2,4,8,16$, the conjectured value is larger than our asymptotic bound. The difference is small with a relative error that increases to about $25\%$ as $\alpha$ approaches~$2$ for $d=24$. 

\begin{figure}[ht]
\centering
\includegraphics[scale=.15]{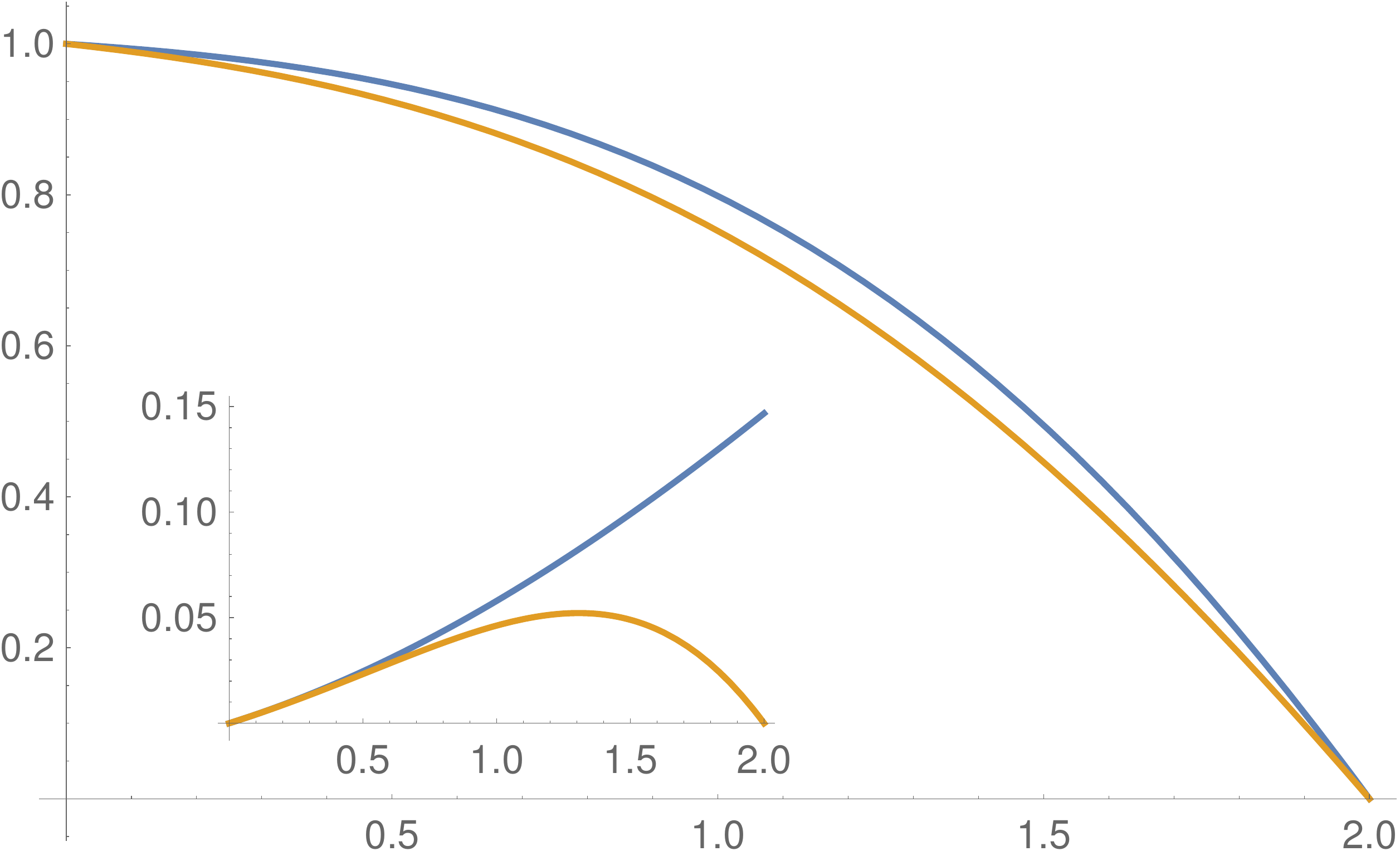} 
\includegraphics[scale=.15]{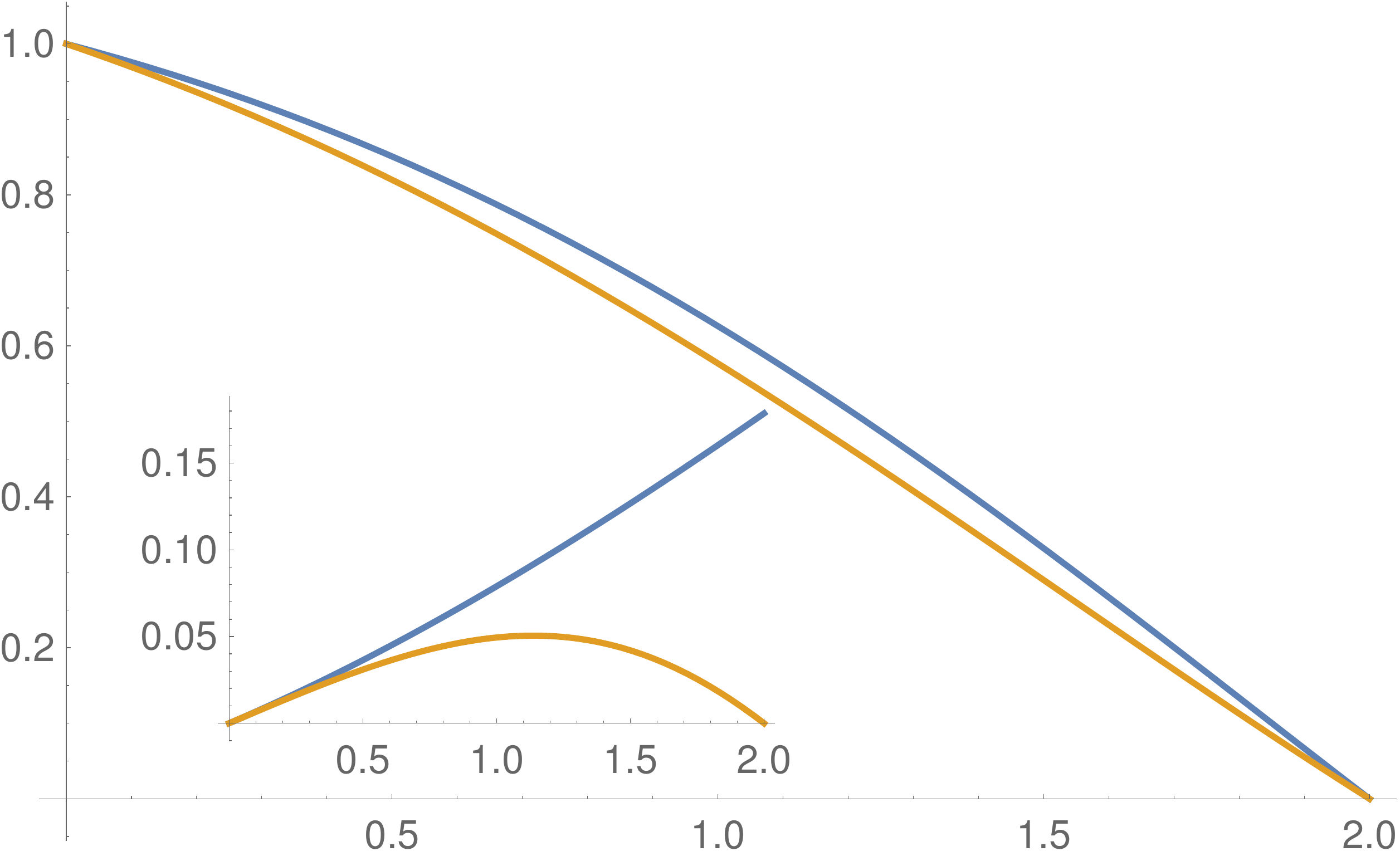}
\\
\includegraphics[scale=.15]{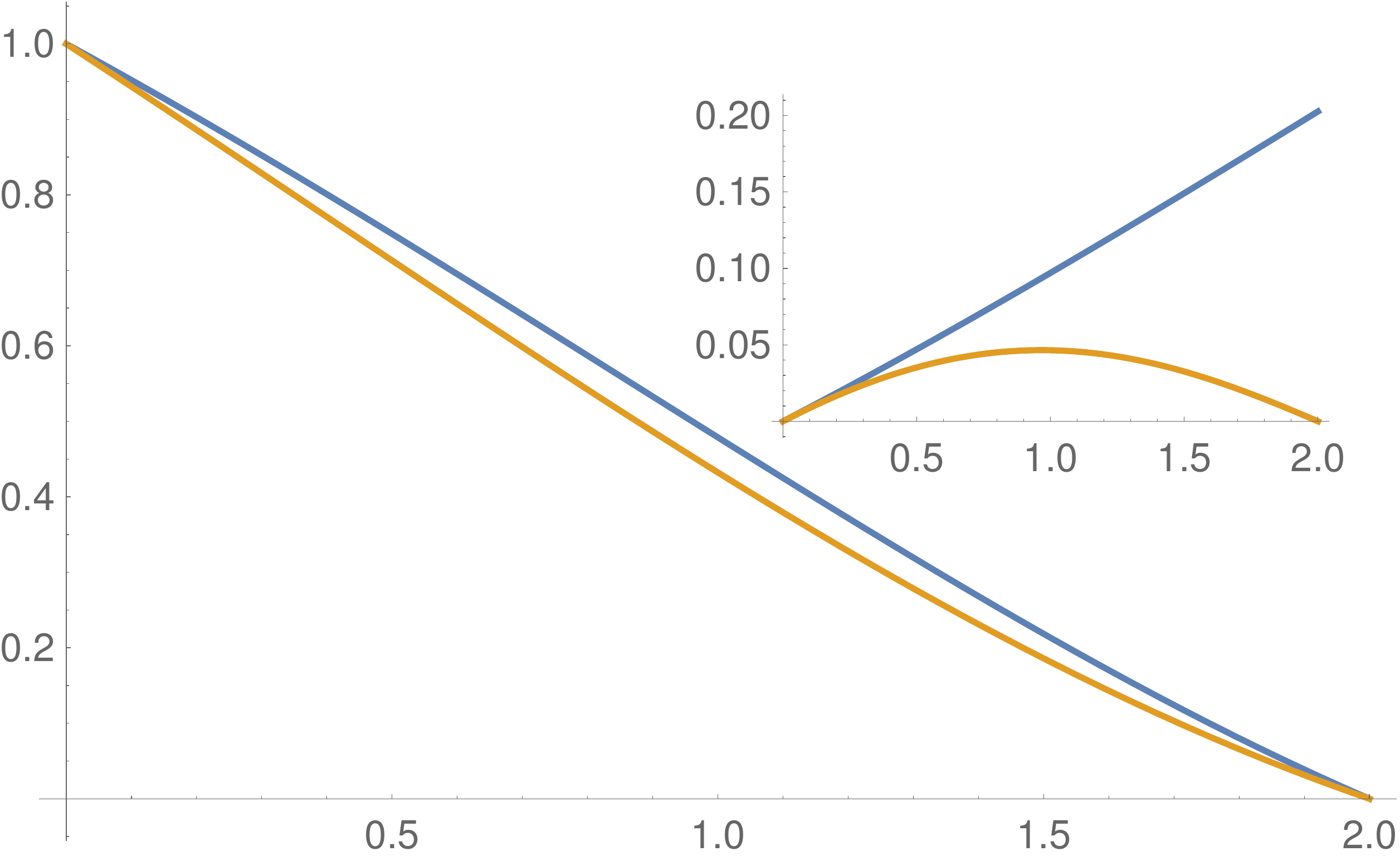} 
\includegraphics[scale=.15]{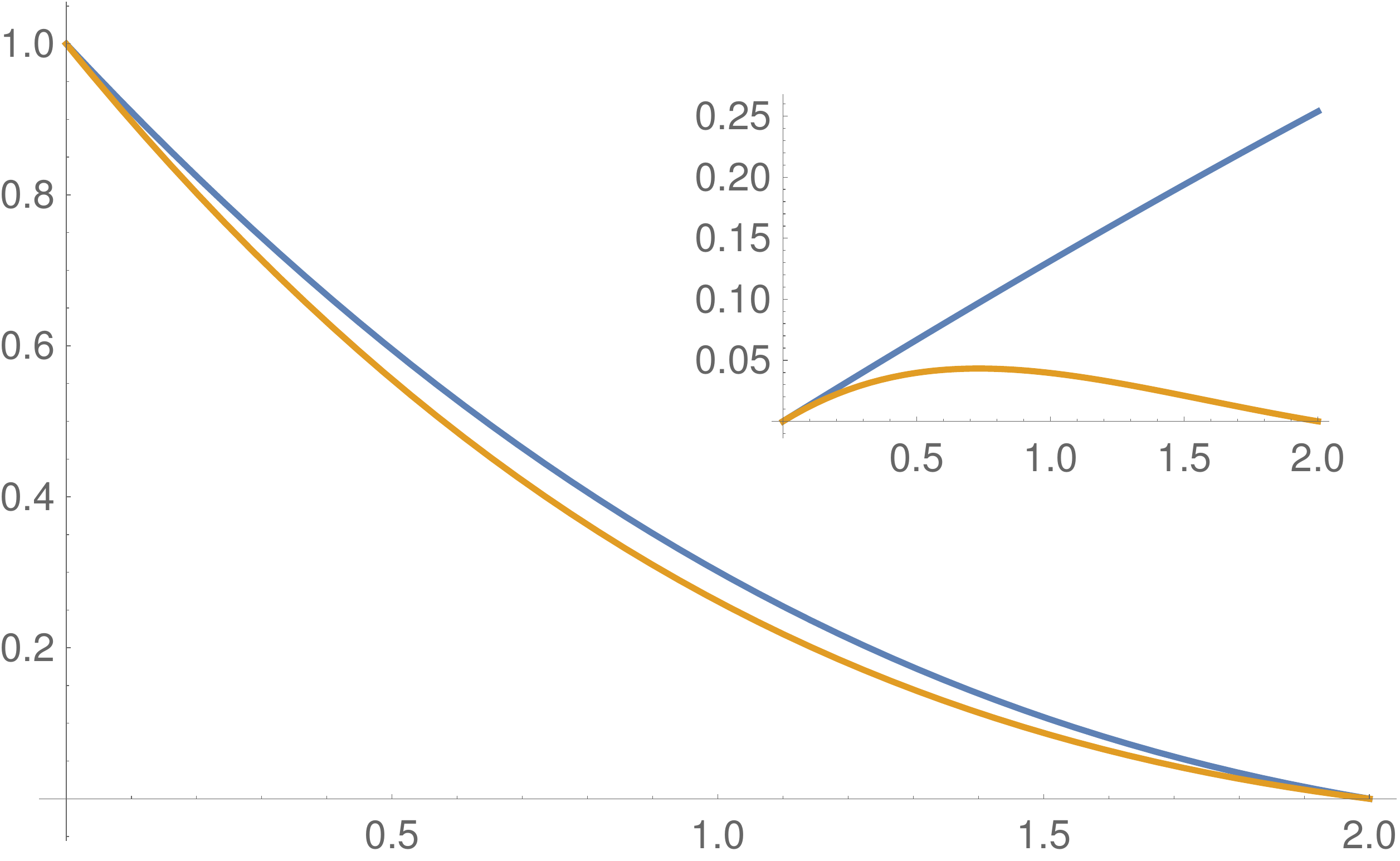}
\\
\caption{\label{fig:calphad} Comparison of the conjectured constant $c_{\alpha,d}^{\mathrm{conj}}$ with the  asymptotic constant $c_{\alpha,d}^{\mathrm{asymp}}$ for $0\leq \alpha \leq 2$ for $d = 2$ (top left), $d = 4$ (top right), $d = 8$ (bottom left), and $d = 16$ (bottom right). Inserts provide information about the increasing relative error $\frac{c_{\alpha,d}^{\mathrm{conj}}-c_{\alpha,d}^{\mathrm{asymp}}}{c_{\alpha,d}^{\mathrm{conj}}}$ and the error $c_{\alpha,d}^{\mathrm{conj}}-c_{\alpha,d}^{\mathrm{asymp}}$.}
\end{figure}

Indeed, one may even consider a general analytic  kernel 
\begin{equation}\label{eq:Kdef}
K( t ) \DEF \sum_{m=0}^\infty a_m t^m
\end{equation}
to arrive at a bound analogous to \eqref{eq:wagner}. 
\begin{prop}\label{prop:wagner+} 
Suppose, the coefficients $a_m$, $m \geq 1$, in \eqref{eq:Kdef} are positive and are of the order $m^{-1 - \frac{\alpha}{2}}$ as $m \to \infty$ for some fixed $\alpha >0$; i.e., $a_m m^{1 + \frac{\alpha}{2}}$  is uniformly bounded above and below for large $m$ (in particular, the Taylor series converges absolutely for $t\in[-1,1]$). Then there exists a constant $\gamma_{K,d} >0$ such that  for  any   $\PT{x}_1, \dots, \PT{x}_N  \in \mathbb S^d$ with $N$ large enough
\begin{equation}\label{eq:wagner+}
\frac{1}{N^2} \sum_{k,j=1}^N  K( \PT{x}_j \cdot \PT{x}_k ) - \int_{\mathbb{S}^d} \int_{\mathbb{S}^d}  K( \PT{x} \cdot \PT{y} )\, \dd \sigma_d( \PT{x} ) \dd \sigma_d( \PT{y} )  \ge  \gamma_{K,d} \, N^{-1-\frac{\alpha}{d}}.
\end{equation}
\end{prop}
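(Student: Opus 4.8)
The plan is to adapt, essentially line by line, the proof of Beck's lower bound \eqref{eq:Beck} given in Section~\ref{sec:simple.proof}, replacing the specific distance kernel by the general kernel $K$. First I would expand the left-hand side of \eqref{eq:wagner+} using the Taylor series \eqref{eq:Kdef}, interchanging sum and integral (justified by the absolute convergence of the Taylor series on $[-1,1]$, which follows from $a_m = \mathcal{O}(m^{-1-\alpha/2})$), to obtain
\begin{equation*}
\frac{1}{N^2} \sum_{j,k=1}^N K( \PT{x}_j \cdot \PT{x}_k ) - \int_{\mathbb{S}^d}\int_{\mathbb{S}^d} K( \PT{x}\cdot\PT{y} )\,\dd\sigma_d(\PT{x})\,\dd\sigma_d(\PT{y}) = \sum_{m=1}^\infty a_m \left( \frac{1}{N^2} \sum_{j,k=1}^N (\PT{x}_j \cdot \PT{x}_k)^m - \int_{\mathbb{S}^d}\int_{\mathbb{S}^d} (\PT{x}\cdot\PT{y})^m\,\dd\sigma_d(\PT{x})\,\dd\sigma_d(\PT{y}) \right),
\end{equation*}
where the $m=0$ term cancels. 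Since the coefficients $a_m$ are positive for $m\geq 1$ by hypothesis, and each parenthetical expression is non-negative by Lemma~\ref{lem:1st.non.negativity}, every term of this series is non-negative, so we may discard all odd $m$ and all even $m$ below a threshold $2M$. Using Lemma~\ref{lem:2nd.inner.product.integrals.asymptotics} for the value of the even integrals, this yields the analogue of \eqref{eq:discrepancy.formula.lower.bound.01}:
\begin{equation*}
\frac{1}{N^2} \sum_{j,k=1}^N K( \PT{x}_j \cdot \PT{x}_k ) - \int_{\mathbb{S}^d}\int_{\mathbb{S}^d} K( \PT{x}\cdot\PT{y} )\,\dd\sigma_d(\PT{x})\,\dd\sigma_d(\PT{y}) \geq \sum_{r=M}^\infty a_{2r} \left( \frac{1}{N^2} \sum_{j,k=1}^N (\PT{x}_j \cdot \PT{x}_k)^{2r} - \frac{\Pochhsymb{\frac12}{r}}{\Pochhsymb{\frac{d+1}{2}}{r}} \right).
\end{equation*}

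Next I would run the same two estimates as in \eqref{eq:2N} and \eqref{eq:sum}. For the parenthetical factor: keeping only the diagonal terms $j=k$ in the double sum bounds it below by $\frac{1}{N} - \frac{\Pochhsymb{1/2}{r}}{\Pochhsymb{(d+1)/2}{r}}$, and by Lemma~\ref{lem:2nd.inner.product.integrals.asymptotics} the subtracted coefficient is monotonically decreasing and of order $r^{-d/2}$, so choosing $M$ comparable to $N^{2/d}$ (precisely, $\alpha' \leq M/N^{2/d} \leq \alpha'+1$ for a suitably large $d$-dependent constant $\alpha'$, so as not to clash with the exponent $\alpha$) guarantees this factor is at least $\frac{1}{2N}$ for all $r\geq M$. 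For the coefficient sum, the hypothesis $a_m \asymp m^{-1-\alpha/2}$ gives $\sum_{r=M}^\infty a_{2r} \asymp \sum_{r=M}^\infty r^{-1-\alpha/2} \asymp M^{-\alpha/2} \asymp N^{-\alpha/d}$. Multiplying the two bounds produces $\gamma_{K,d}\, N^{-1-\alpha/d}$, which is \eqref{eq:wagner+}.

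There is really no serious obstacle here — the whole point of the proposition is that the argument of Section~\ref{sec:simple.proof} only used three features of the distance kernel: positivity of the Taylor coefficients $a_m$ for $m\geq 1$, the decay rate $a_m \asymp m^{-1-\alpha/2}$ (with $\alpha=1$ for the distance), and absolute convergence on $[-1,1]$; all three are now assumed outright. The one point requiring a little care is the interchange of summation and integration when the point configuration is fixed: since the Taylor series converges absolutely and uniformly on $[-1,1]$ and the sphere has finite measure, Fubini/Tonelli applies without issue, and the same reasoning validates the term-by-term manipulation for the finite point sum. A second minor bookkeeping point is ensuring the two-sided bound $a_m m^{1+\alpha/2} \asymp 1$ is invoked correctly: the \emph{lower} bound on $a_m$ is what is needed for the lower bound $\sum_{r\geq M} a_{2r} \gtrsim M^{-\alpha/2}$, while the \emph{upper} bound is needed only to guarantee absolute convergence of the Taylor series on the closed interval (so that the kernel is well-defined and the rearrangements are legal). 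Everything else is identical to the proof already given.
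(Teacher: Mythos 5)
Your proposal is correct and follows exactly the route the paper intends: the paper gives no separate detailed proof of Proposition~\ref{prop:wagner+}, stating only that the elementary argument of Section~\ref{sec:simple.proof} applies almost verbatim once the distance coefficients are replaced by the $a_m$, and your write-up fills in precisely those steps (term-by-term non-negativity via Lemma~\ref{lem:1st.non.negativity}, the integral asymptotics from Lemma~\ref{lem:2nd.inner.product.integrals.asymptotics}, diagonal terms, $M\asymp N^{2/d}$, and the tail sum $\sum_{r\ge M}a_{2r}\gtrsim M^{-\alpha/2}$). Your closing remarks about which side of the two-sided bound on $a_m$ is used where, and about restricting attention to even indices, also match the paper's own observation following the proposition.
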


For example, by \cite[Formula~7.3.2.84]{PrBrMa1990III} 
\begin{equation*}
K( t ) 
= 
\frac{1}{2}\sum_{m=0}^\infty \frac{\Pochhsymb{\frac{1}{2}}{m}}{\Pochhsymb{2}{m}} \, t^m 
=
\frac{1}{2}\Hypergeom{2}{1}{\frac{1}{2},1}{2}{t} 
=
\frac{1}{1 + \sqrt{1-t}}
\end{equation*}
with $\frac{1}{2}\frac{\Pochhsymb{\frac{1}{2}}{m}}{\Pochhsymb{2}{m}} = \frac{1}{2\sqrt{\pi}} \, \frac{\gammafcn( m + \frac{1}{2} )}{\gammafcn( m + 2 )} \sim \frac{1}{2\sqrt{\pi}} \, m^{-1-\frac{1}{2}}$ as $m \to \infty$.   Hence Proposition~\ref{prop:wagner+} applies with $\alpha =1$.  

Observe also that Proposition~\ref{prop:wagner+} continues to hold if the asymptotic relation $a_m \sim m^{-1 - \frac{\alpha}{2}}$ is imposed only on even indices $m$, since only even terms are used in  \eqref{eq:discrepancy.formula.lower.bound.01}.


\appendix

\section{Proof of Lemmas \ref{lem:1st.non.negativity} and \ref{lem:2nd.inner.product.integrals.asymptotics}}\label{app:lem}
We now turn to the proof of the two technical lemmas. 

\begin{proof}[Proof of Lemma \ref{lem:1st.non.negativity}]
 The case $m=0$ is obvious. 
 
Recall that the Gram matrix ${G_1 \DEF \big[ \PT{x}_j \cdot \PT{x}_k \big]_{j,k =1}^N}$ is positive semidefinite, and hence by Schur's product theorem, so is ${G_m \DEF \big[ \left( \PT{x}_j \cdot \PT{x}_k \right)^m \big]_{j,k =1}^N}$.

Since for each odd $m\ge 1$ the integral in \eqref{eq:pos} is zero, the statement immediately follows in this case: indeed, let $\PT{1}$ be a vector with $1$ in each coordinate, then 
\begin{equation*}
\sum_{j=1}^N \sum_{k=1}^N \left( \PT{x}_j \cdot \PT{x}_k \right)^m = \PT{1}^T G_m \PT{1} \geq 0.
\end{equation*}

In the general case, for any signed Borel measure $\mu$ on $\mathbb S^d$, define the energy integral 
\begin{equation*}
I_m (\mu ) \DEF \int_{\mathbb{S}^d} \int_{\mathbb{S}^d} \left( \PT{x} \cdot \PT{y} \right)^m  \dd \mu ( \PT{x} ) \dd \mu ( \PT{y} ).
\end{equation*}
Observe that positive semidefiniteness of $G_m$ implies that $I_m (\mu ) \ge 0$. Indeed, for a discrete measure $\mu = \sum c_j \delta_{\PT{x_j}}$, one has $I_m (\mu) = \sum_{j=1}^N \sum_{k=1}^N \left( \PT{x}_j \cdot \PT{x}_k \right)^m c_j c_k \ge 0$, and for general measures positivity follows by $w^*$-density of discrete measures. 

We also observe that for any Borel probability measure $\nu$ on $\mathbb S^d$, we have 
\begin{equation}\label{eq:diff}
0\le I_m (\nu - \sigma_d) = I_m (\nu)  + I_m (\sigma_d) - 2 \int_{\mathbb{S}^d}  \int_{\mathbb{S}^d} \left( \PT{x} \cdot \PT{y} \right)^m  \dd \sigma_d (\PT{x}) \dd \nu (\PT{y}) =   I_m (\nu)  - I_m (\sigma_d),
\end{equation} where we have used the fact that the inner integral with respect to $\dd \sigma_d (\PT{x})$ above is independent of $\PT{y}$ due to rotational invariance, and is thus equal to $I_m (\sigma_d)$. Taking $\nu = \frac{1}{N} \sum_{j=1}^N \delta_{\PT{x}_j}$ in \eqref{eq:diff} finishes the proof of the lemma since in this case $I_m (\nu) =  \frac{1}{N^2} \sum_{j,k=1}^N \left( \PT{x}_j \cdot \PT{x}_k \right)^m$. 
\end{proof}

We remark that the identity $I_m (\nu)  - I_m (\sigma_d) = I_m (\nu - \sigma_d)$ exhibited in \eqref{eq:diff} has been observed in, e.g., \cite{BoHaSaBook2019,La1972} and has been used in the context of discrepancy and Stolarsky principle in \cite{BiDa2019,BiDaMa2018}.

\begin{proof}[Proof of Lemma \ref{lem:2nd.inner.product.integrals.asymptotics}]
Due to rotational invariance, it suffices to compute the integral just with respect to $\PT{x}$. Since $\PT{x} \mapsto ( \PT{x} \cdot \PT{y} )^m$ is a zonal function, for odd integers $m$, the integral $\int_{\mathbb{S}^d} \left( \PT{x} \cdot \PT{y} \right)^m  \dd \sigma_d( \PT{x} )$ vanishes and for even integers $m = 2r$, the Funk-Hecke formula yields
\begin{align*}
\int_{\mathbb{S}^d} \left( \PT{x} \cdot \PT{y} \right)^m  \dd \sigma_d( \PT{x} )
&=
\frac{\omega_{d-1}}{\omega_d} \int_{-1}^1 t^{2r} \left( 1 - t^2 \right)^{\frac{d}{2}-1} \dd t 
= 
\frac{\omega_{d-1}}{\omega_d} \int_0^1 u^{r+\frac{1}{2}-1} \left( 1 - u \right)^{\frac{d}{2}-1} \dd u \\
&=
\frac{\omega_{d-1}}{\omega_d} \frac{\gammafcn( r + \frac{1}{2} ) \gammafcn( \frac{d}{2} )}{\gammafcn( r + \frac{d+1}{2} )} 
= 
\frac{\omega_{d-1}}{\omega_d} \frac{\gammafcn( \frac{d}{2} )\gammafcn( \frac{1}{2} )}{\gammafcn( \frac{d+1}{2} )} \, \frac{\Pochhsymb{\frac{1}{2}}{r}}{\Pochhsymb{\frac{d+1}{2}}{r}} 
= 
\frac{\Pochhsymb{\frac{1}{2}}{r}}{\Pochhsymb{\frac{d+1}{2}}{r}}.
\end{align*}
One can then easily obtain the asymptotics 
\begin{equation*}
\frac{\Pochhsymb{\frac{1}{2}}{r}}{\Pochhsymb{\frac{d+1}{2}}{r}} 
= 
\frac{\gammafcn( \frac{d+1}{2} )}{\gammafcn( \frac{1}{2} )} \, \frac{\gammafcn( r + \frac{1}{2} )}{\gammafcn( r + \frac{d+1}{2} )} 
\sim
\frac{\gammafcn( \frac{d+1}{2} )}{\gammafcn( \frac{1}{2} )} \, \frac{1}{r^{\frac{d}{2}}} \qquad \text{as $r \to \infty$.}
\end{equation*}
Application of Gautschi’s Inequality \cite[Eq.~5.6.4]{NIST:DLMF} gives
\begin{equation*}
\frac{\gammafcn( r + \frac{1}{2} )}{\gammafcn( r + \frac{d+1}{2} )} 
=
\begin{cases}
\dfrac{1}{\Pochhsymb{r+\frac{1}{2}}{\frac{d}{2}}} & \text{$d$ even,} \\
\dfrac{\gammafcn(r+\frac{1}{2})}{\gammafcn(r+1)} \, \dfrac{1}{\Pochhsymb{r+1}{\frac{d-1}{2}}} & \text{$d$ odd}
\end{cases}
<
\frac{1}{r^{\frac{d}{2}}}.
\end{equation*}
\end{proof}

\section{Proof of Lemma~\ref{lem:sum.asymptotics.alpha}} 
\label{app:proof.lem:sum.asymptotics.alpha}

\begin{proof}
Let $0 < \alpha < 2$. With the help of Mathematica
\begin{equation*}
\sum_{m=2M}^\infty \frac{-\Pochhsymb{-\frac{\alpha}{2}}{m}}{m!}
=
\frac{1}{\gammafcn( 1 - \frac{\alpha}{2} )} \, \frac{\gammafcn( 2M - \frac{\alpha}{2} )}{\gammafcn( 2 M )}.
\end{equation*}
The asymptotic relation follows, e.g., from \cite[Eq.~5.11.13]{NIST:DLMF}. 

The second series can be written as a $3F2$-hypergeometric function,
\begin{equation*}
\sum_{r=M}^\infty \frac{-\Pochhsymb{-\frac{\alpha}{2}}{2r}}{(2r)!} \, \frac{\Pochhsymb{\frac{1}{2}}{r}}{\Pochhsymb{\frac{d+1}{2}}{r}}
=
\frac{-\Pochhsymb{-\frac{\alpha}{2}}{2M}}{(2M)!} \, \frac{\Pochhsymb{\frac{1}{2}}{M}}{\Pochhsymb{\frac{d+1}{2}}{M}} \Hypergeom{3}{2}{M-\frac{\alpha}{4}, M-\frac{\alpha}{4}+\frac{1}{2}, 1}{M+\frac{d+1}{2},M+1}{1}.
\end{equation*}
Since $\frac{d+\alpha}{2} > 0$ and $M + \frac{d-1}{2} > 0$ for $M \geq 1$, we may apply \cite[Eq.~7.4.4.1, p.~533]{PrBrMa1990III} to obtain
\begin{equation*} 
\sum_{r=M}^\infty \frac{-\Pochhsymb{-\frac{\alpha}{2}}{2r}}{(2r)!} \, \frac{\Pochhsymb{\frac{1}{2}}{r}}{\Pochhsymb{\frac{d+1}{2}}{r}}
=
\frac{-\Pochhsymb{-\frac{\alpha}{2}}{2M}}{(2M)!} \, \frac{\Pochhsymb{\frac{1}{2}}{M}}{\Pochhsymb{\frac{d+1}{2}}{M}} \, \frac{2M + d - 1}{d+\alpha} \Hypergeom{3}{2}{\frac{1}{2}+\frac{\alpha}{4}, 1+\frac{\alpha}{4}, 1}{1+\frac{d+\alpha}{2},M+1}{1}.
\end{equation*}
The $3F2$-hypergeometric function has a convergent series expansion of positive terms which can be seen as a generalized asymptotic expansion as $M \to \infty$. By \eqref{eq:Pochhammer.identities} and simplification, 
\begin{equation*}
\begin{split}
&\sum_{r=M}^\infty \frac{-\Pochhsymb{-\frac{\alpha}{2}}{2r}}{(2r)!} \, \frac{\Pochhsymb{\frac{1}{2}}{r}}{\Pochhsymb{\frac{d+1}{2}}{r}} \\
&\phantom{equ}=
2^{-\frac{\alpha}{2}} \frac{\alpha \gammafcn( \frac{d+1}{2} )}{2 \sqrt{\pi} \gammafcn( 1 - \frac{\alpha}{2} ) ( d + 1 )} \, \frac{\gammafcn( M-\frac{\alpha}{4} ) \gammafcn( M-\frac{\alpha}{4}+\frac{1}{2} )}{\gammafcn( M+1 ) \gammafcn( M+\frac{d-1}{2} )} \Hypergeom{3}{2}{\frac{1}{2}+\frac{\alpha}{4}, 1+\frac{\alpha}{4}, 1}{1+\frac{d+\alpha}{2},M+1}{1}.
\end{split}
\end{equation*}
Using asymptotic expansions of ratios of gamma functions for large $M$ and the series expansion of the hypergeometric function, we arrive at the desired result. 
\end{proof}

\bibliographystyle{abbrv}
\bibliography{REFS}

\end{document}